\numberwithin{equation}{section}
\newtheorem{thm}{Theorem}[section]
\newtheorem{lemma}[thm]{Lemma}
\newtheorem{prop}[thm]{Proposition}
\newtheorem{defin}[thm]{Definition}
\newtheorem{cor}[thm]{Corollary}
\theoremstyle{remark}
\newtheorem{rmk}[thm]{Remark}
\theoremstyle{remark}
\def\ts{\otimes}
\def\oh{\frac{1}{2}}
\def\ket#1{| #1 \rangle}
\def\te{\theta}
\newcommand\CA{{\mathcal A}}
\newcommand\CH{{\mathcal H}}
\def\oh{\frac{1}{2}}
\def\ket#1{| #1 \rangle}
\newcommand{\bR}{\mathbb{R}}
\newcommand{\Tb}{\mathbb{T}}         
\newcommand{\bt}{\textbf}
\newcommand{\cop}{\Delta}
\newcommand{\Z}{\mathbb{Z}}
\newcommand{\R}{\mathbb{R}}
\newcommand{\C}{\mathbb{C}}
\newcommand{\T}{\mathbb{T}}
\newcommand{\A}{\mathcal{A}}
\newcommand{\B}{\mathcal{B}}
\newcommand{\eps}{\varepsilon}
\renewcommand{\H}{\mathcal{H}}
\newcommand{\Cl}{\mathbb{C}\mathrm{l}}
\newcommand{\id}{\mathrm{id}}
\title{\vspace{-1cm}Dirac operators\\
on noncommutative principal circle bundles}
\date{}
\author{Ludwik D\k{a}browski${}^{1,*}$, Andrzej Sitarz${}^{2,3,**}$ and Alessandro Zucca$^{\,1}$\\ 
\vbox{
\small
\begin{center}
${}^1$ SISSA (Scuola Internazionale Superiore di Studi Avanzati), \\
via Bonomea 265, 34136 Trieste, Italy \\ \ \\
${}^2$ Institute of Physics, Jagiellonian University,\\
Reymonta 4, Cracow, 30-059 Poland \\ \ \\
${}^3$ Institute of Mathematics of the
Polish Academy of Sciences,\\
\'Sniadeckich 8, Warsaw, 00-950 Poland \\ \ \\
${}^{*}$ Partially supported by PRIN 2010-11 "Operator Algebras, Noncommutative Geometry and Applications".\\ \ \\
${}^{**}$ Partially supported by NCN grant 2011/01/B/ST1/06474.\\
\end{center}
}
}
\begin{document}
\maketitle
\begin{abstract}
We study spectral triples over noncommutative principal $U(1)$-bundles of arbitrary dimension
and a compatibility condition between the connection and the Dirac operator on the total 
space and on the base space of the bundle. Examples of low dimensional  noncommutative tori 
are analysed in more detail and all connections found that are compatible with an admissible 
Dirac operator. Conversely, a family of new Dirac operators on the noncommutative tori, 
which arise from the base-space Dirac operator and a suitable connection is exhibited. 
These examples are extended to the theta-deformed principal $U(1)$-bundle $S^3_\theta \to S^2$.
\end{abstract}

\thispagestyle{empty}
\section{Introduction}

The spectral triples \cite{C94,C96} and in particular the Dirac operators on noncommutative
odd-dimensional principal $U(1)$-bundles have been studied in \cite{DS10}.
Therein the definition of a connection as an operator and a compatibility condition with the Dirac operators on the total space and on the base space of the bundle were proposed.
This was based on the classical situation \cite{A98,AB98} and the abstract algebraic approach
to noncommutative principal bundles. 
Moreover, the case of the noncommutative 3-torus viewed as a $U(1)$-bundle 
over the noncommutative 2-torus was analysed and all connections compatible 
with an admissible Dirac operator were found. 
Conversely, a family of new Dirac operators on the odd-dimensional tori was constructed, 
that are consistent with the base-space (even) Dirac operator and a suitable connection.

In this paper by making an ampler use of the real structure we slightly modify the 
scheme of \cite{DS10} and so weaken the assumptions made there. In addition we extend 
the study to principal $U(1)$-bundles of arbitrary dimension. As examples we discuss two
and four dimensional  noncommutative tori and the theta deformation of the principal Hopf $U(1)$-bundle $S^3 \to S^2$, which was studied in \cite{BrzSi}.

\section{Quantum principal $U(1)$-bundles}

In the noncommutative realm the notion of quantum principal bundles have been recently 
formulated in the context of principal comodule algebras. This evolved from earlier extensive studies
of quantum principal bundles with universal differential calculus, of principal extensions, and 
of Hopf-Galois extensions, see e.g. \cite{BM93,DGH,HM99,H96}. 
For our purpose we shall also need a notion of the principal connection adapted to the 
case of Dirac operator induced calculi.


Let $H$ be a unital Hopf algebra over $\C$ with a counit 
$\varepsilon$ and an invertible antipode $S$. Let $\A$ be a right $H$-comodule algebra (which 
we also assume to be unital). We will use the Sweedler notation for the right 
coaction of $H$ on $\A$:
$$ \Delta_R(a) = a_{(0)} \otimes a_{(1)} \in \A \otimes H.$$ 
(We shall often skip the adjective right if it is clear from the context).
Then $\A$ is called {\em principal $H$-comodule algebra} (c.f. \cite{BZ11}) whenever 
there exists a $\C$-linear unital map 
$$\ell:H\longrightarrow \A\otimes \A$$ such that
\begin{subequations}
\label{strong}
\begin{gather}
m_\A\circ \ell = \eta \circ \varepsilon, \label{strong2}\\
 (\ell\otimes\id)\circ\Delta  = (\id\otimes \Delta_R)\circ \ell , \label{strong3}\\
(S\otimes \ell)\circ\Delta = (\sigma\otimes \id)\circ (\Delta_R\otimes \id)\circ \ell ,  \label{strong4}
\end{gather}
\end{subequations}
where $m_\A:\A\otimes \A\to \A$ is the multiplication in $\A$,  
$\eta:\C\to \A$ is the unit map and $\sigma:\A\otimes H\to H\otimes \A$ is the flip.

It follows then that $\A$ is a {\em Hopf-Galois extension} of 
$\B:=\{b\in \A\  \ |\ \Delta_R(a) = b \otimes 1  \}$, its subalgebra of coinvariant elements.
Namely 
$$
\A\otimes H\ni a\otimes h\longmapsto a\,\pi_B (\ell(h))\in \A\otimes_B\A ,
$$
where $\pi_B :\A\otimes\A \to \A\otimes_B\A $ is the usual projection,
is the (left and right) inverse of the  {\em canonical map} 
$\chi:= (m_\A \ts \id)(\id \ts \cop_R)$,
\begin{equation}
\A \otimes_\B \A \ni  a' \otimes a \mapsto
\chi( a' \otimes a) = a' a_{(0)} \otimes a_{(1)} \in \A \otimes H.
\end{equation}

By quantum principal bundle we shall understand the structure defined as above,
and we shall often denote it simply by the inclusion map $\B \hookrightarrow \A$. 
Moreover, we shall call the map $\ell$ {\em strong connection lift}.
It determines the strong connection (for the universal calculus on $\A$)
which can be equivalently described in four other terms (cf. \cite{DGH}).
One of them is by a certain splitting of the multiplication map $\B\otimes\A \to \A$ 
(which shows the equivariant left projectivity of $\A$ over $\B$). 
Another one, the so called {\em connection form}, 
will be adapted in section \ref{omega} to the case of the Dirac-operator induced 
differential calculus, and represented as an operator on a Hilbert space.

In this paper we restrict our discussion only to the case of the structure group 
given by the classical one-dimensional Lie group $U(1)$. We shall denote by 
$H$ its coordinate algebra 
i.e. the complex polynomial algebra in $z$ and $z^{-1}$, with the star structure given by $zz^*=1$,
the coproduct by $\Delta(z)=z\otimes z$,  
a counit $\varepsilon(z)=1$ and an invertible antipode $S(z)=z^{-1}$.
We can split the $H$-comodule algebra $\A$ as a direct sum
\begin{equation}\label{AsumAk}
 \A = \bigoplus_{k\in\Z^n} \A^{(k)},
\end{equation}
where the $\A^{(k)}$ are the set of elements of homogeneous degree $k$; that is,
$$a\in \A^{(k)}\quad\Leftrightarrow \quad \Delta_R(a) = a\otimes z^k.$$
Of course, $\A^{(0)} = \B$.

\section{Dirac operator and spectral triples}

According to Alain Connes the noncommutative metric and spin  geometry is encoded in terms 
of spectral triples \cite{C94,C95,C96}. 
In what follows $\A$ will always denote a unital complex $^*$-algebra.

\begin{defin}\label{defsptr}
A \emph{spectral triple} for an algebra $\A$ is a triple $(\A,\H,D)$, where $\H$ is a Hilbert space carrying a representation of $\A$ by bounded operators (which we shall simply denote by $\psi\mapsto a\psi$, for any $a\in \A$, $\psi\in\H$) and $D$ is a densely defined, selfadjoint operator on $\H$ with compact resolvent, such that for any $a\in\A$ the commutator $[D,a]$ is a bounded operator.
\end{defin}

In the commutative case when $\A$ is the algebra of smooth functions over a Riemannian spin manifold $M$, $\H$ corresponds to the Hilbert space of $L^2$-sections of the spinor bundle, and $D$ is the Dirac operator, on the spinor bundle, associated to the Levi-Civita connection. So, given a spectral triple $(\A,\H,D)$, we will call $\H$ the \emph{space of spinors} and $D$ the \emph{Dirac operator}.
The algebra $\A$ will be always certain suitable *-algebra of operators on $\H$, 
dense in its norm completion (C*-algebra), in particular it can be a (noncommutative)
polynomial algebra presented in terms of generators and relations. 

An important requirement on spectral triples is that of reality.
\begin{defin}\label{defrsptr}
A \emph{real spectral triple} of \emph{$KR$-dimension} j, where $j\in\Z_8$,
 consists of the data
$(\A,\H,D,J)$ when $j$ is odd, whereas of the data  $(\A,\H,D,J,\gamma)$ 
when $j$ is even, 
where $(\A,\H,D)$ is a spectral triple, $J$ is antiunitary operator and $\gamma$ is a $\Z_2$-grading 
on $\H$ such that:
\begin{description}
\item[(i)] for any $a,b\in\A$, $[a,Jb^*J^{-1}] = 0$;
\item[(ii)] $J$, $D$ and $\gamma$ satisfy the following commutation relations:
$$J^2 = \eps \id,\quad\quad JD = \eps' DJ$$
and, for $j$ even,
$$J\gamma = \eps''\gamma J,\quad\quad \gamma D = - D\gamma,$$
where $\eps,\eps',\eps''$ depend on the $KR$-dimension and are given in the following table 
(c.f. \cite{DD10}, which admits additional possibilities with respect to the table in 
\cite{C96}, whose entries are marked by $\bullet$)

\begin{table}[!ht]\label{ko-dim}
\centering
\begin{tabular}{|c||c|c|c|c||c|c|c|c||c|c|c|c|}
\hline
$j$	&	0	&	2	&	4	&	6	&	0	&	2	&	4	&	6 &	1	&	3	&	5	&	7\\
\hline
$\epsilon$	&	$+$	&	$-$  &	$-$	&	$+$	&	$+$	&	$+$	&	$-$	&	$-$	&	$+$	&	$-$	&	$-$ &	$+$\\
$\epsilon'$	&	$+$	&	$+$	&	$+$	&	$+$	&	$-$	&	$-$	&	$-$	&	$-$	&	$-$	&	$+$	&	$-$	&	$+$\\
$\epsilon''$	&	$+$	&	$-$	&	$+$	&	$-$	&	$+$	&	$-$	&	$+$	&	$-$	&	&	&	&\\
\hline
&	$\bullet$	&$\bullet$&$\bullet$&$\bullet$&	&	&	&	&$\bullet$&$\bullet$&$\bullet$&$\bullet$\\
\hline
\end{tabular}
\end{table}
\end{description}
\end{defin}
The operator $J$ will be usually called the \emph{real structure} of the spectral triple. 
We will often treat together the even and the odd dimensional case. 
Thus in general we will write $(\A,\H,D,J,\gamma)$ for a real spectral triple, keeping in mind
that in the odd case $\gamma=1$ and so will be often omitted.

The antiunitary operator $J$ determines a left action of the opposite algebra $\A^\circ$ 
(or, equivalently, a right action of the algebra $\A$) on the Hilbert space $\H$,
\begin{equation}\label{JaJ}
\pi^\circ(b)\psi = \psi b = Jb^*J^{-1}\psi, \quad \forall\, b\in\A, \psi\in\H,
\end{equation}
which due to the condition (i) of definition \ref{defrsptr} 
commutes with the representation of $\A$ on $\H$; that is, $J$ maps $\A$ into its commutant on $\H$. 
This makes the Hilbert space $\H$ a bi-module over $\A$. 

Moreover we shall assume
the so-called \emph{first order condition}, that is the requirement that $\A^\circ$ 
commutes not only with $\A$ but also with $[D,\A]$ so that, for any $a,b\in\A$, 
we have:
\begin{equation}\label{firstordcond}
[[D,a],Jb^*J^{-1}] = 0.
\end{equation}
Notice that conjugating the above identity with $J$ one can show \eqref{firstordcond} 
to be equivalent to
$$[[D,Jb^*J^{-1}],a] = 0;$$
and so the first order condition is ``symmetric'' in $\A$ and $\A^\circ$.

\section{$U(1)$--equivariant projectable spectral triples}

We will assume that the coaction of $H$ on $\A$  (which in the context of $C^*$-algebras 
is used to formulate the notion of free and proper action) comes from the usual action 
of $U(1)$ on $\A$. Moreover, we will require that there is an infinitesimal action $\delta$ 
of the generator of the Lie algebra $u(1)\simeq\R$.

To use more precise language, we shall consider real spectral triples over $\A$, which 
are $U(1)$-equivariant \cite{Sitarz03}, that is, for which the action of $U(1)$ (and of $u(1)$ 
via $\delta$) is implemented on $\H$ as follows.
\begin{defin}\label{defU1equivst}
A  real spectral triple $(\A,\H,D,J,\gamma)$ is a \emph{$U(1)$-equivariant real spectral
triple}
iff there exists a selfadjoint operator $\hat\delta$ on $\H$ with the domain which is dense 
and stable under the action of $\A$, $D$, $J$ and $\gamma$,  such that 
$$\hat\delta(\pi(a)\psi) = \pi(\delta(a))\psi + \pi(a) \hat\delta(\psi),$$
and 
$$\hat\delta J + J\hat\delta = 0,\quad\quad [\hat\delta,\gamma] = 0,
\quad\quad [\hat\delta,D] = 0, $$
on the common core of $\hat\delta$ and $D$. 
\end{defin}

Actually, we require also that the spectrum of $\hat\delta$ is $\Z$
(it could be also $\Z + {1\over 2}$): 
this corresponds to the assumption that the $U(1)$ action on the tangent bundle lifts 
to an action and not to a projective action on the spinor bundle.
Hence, if $(\A,\H,D,J,\gamma,\hat\delta)$ is a $U(1)$-equivariant real spectral triple, 
we can split the Hilbert space $\H$ accordingly to the spectrum of $\hat\delta$:
$$\H = \bigoplus_{k\in\Z} \H_k,$$
and this decomposition is preserved by the Dirac operator $D$. Moreover 
$\pi(\A^{(k)})\H_l\subseteq \H_{k+l}$ for any $k,l\in\Z$; in particular $\H_0$ is stable 
under the action of the invariant subalgebra 
$\B = \A^{coH} = \A^{(0)}$.
Throughout the rest of the paper we shall denote $\hat\delta$ for simplicity by $\delta$.


\subsection{Operator of strong connection for the Dirac calculus}\label{omega}

With a spectral triple $(\A,\H,D)$ there is associated the (first order differential) 
Dirac calculus $\Omega^1_{D}(\A)$, given by the linear span of all operators of the form 
$a' [D, a]$,  $a, a' \in \A$, with the differential of $a$ given by $da = [D, a]$.
On the Hopf algebra $H$ we shall use the calculus $\Omega^1(H)$ induced from the
usual de Rham calculus on $U(1)$ (which can be also viewed as Dirac calculus 
for the operator $i\frac{\partial}{\partial \phi} $ on $L^2(U(1))$).


Following \cite{DS10} we say that $\Omega^1_{D}(\A)$ is {\em compatible} 
with $\Omega^1(H)$ iff
\begin{equation}
 \sum_i p_i [D, q_i] = 0, {\rm \, for \ }  p_i,q_i \in \A \Rightarrow 
\sum_i p_i \delta(q_i) = 0.
\label{eq43}
\end{equation}
This agrees with the compatibility of these calculi viewed as quotients of the universal 
calculi. 
Accordingly we can give the notion of connection form adapted to the Dirac-induced 
calculi, represented as operators on certain Hilbert space, as follows:
\begin{defin}
\label{defcon}
We say that $\omega \in \Omega^1_D(\A)$ is a strong principal connection 
for the $U(1)$-bundle $\B \hookrightarrow \A$ if the following conditions hold:
\begin{align*}
& [\delta, \omega] = 0, \;\; \hbox{\em ($U(1)$ invariance of $\omega$),} \\
& \hbox{if\ } \omega = \sum_i p_i [D, q_i]\; \hbox{then} \sum_i p_i \delta(q_i) = 1, \;\; 
\hbox{\em (vertical field condition)}, \\
& \forall a \in \A,  \nabla_\omega (a):=[D,a] - \delta(a) \omega \in \Omega_D^1(\B) \A, 
\;\; \hbox{\em (strongness).} 
\end{align*}
\end{defin}
Note that the second condition is meaningful due to assumption (\ref{eq43}). 

The expression in the third condition defines a $D$-connection $\nabla_\omega$ 
on the left $\B$-module $\A$ (covariant derivative with respect to the calculus 
$\Omega_D^1(\B)$), in the sense that
\begin{equation}
\nabla_\omega (ba)  = [D,b]a + b\nabla_\omega  (a).
\label{covderA}
\end{equation}
It follows by direct computation, see lemma 5.5 in \cite{DS10}, 
that a $D$-connection $\nabla_\omega$ is hermitian if $\omega$ is selfadjoint 
as an operator on $\H$.

\subsection{Projectable spectral triples: odd case}\label{ssectprojU1odd}
Now we discuss  in the framework of noncommutative geometry the projectability of a spectral triple, i.e. that it descends via a reduction by dimension one to the quotient space. 
We have to distinguish the case of  odd and of even dimension.
We begin by considering the former.

Let $\B\hookrightarrow\A$ be a principal $H$-comodule algebra, where $H$ is 
the coordinate algebra of $U(1)$, and consider a $U(1)$-equivariant odd real spectral 
triple $(\A,\H,D,J,\delta)$ over the total space $\A$. 
Assume that the Dirac calculus $\Omega^1_D(\A)$ is compatible with the de Rham 
calculus on $H$.
Then we give the following definition (c.f. \cite{DS10}).
\begin{defin}\label{defprojoddst}
An odd $U(1)$-equivariant real spectral triple $(\A,\H,D,J,\gamma,\delta)$ 
of $KR$-dimension $j$ 
is said to be \emph{projectable} along the fibers if there exists a $\Z_2$ grading 
$\Gamma$ on $\H$, which satisfies the following conditions,
$$\Gamma^2 = 1,\quad\quad \Gamma^* = \Gamma,$$
$$[\Gamma,\pi(a)]=0 \quad\forall a\in\A,$$
$$[\Gamma,\delta] = 0,$$
\begin{equation*}
\Gamma J = \left\{\begin{array}{lcl}
J\Gamma & \quad & \mathrm{if}\; j \equiv 1 \; (\mathrm{mod}\; 4) \\
-J\Gamma & \quad & \mathrm{if}\; j \equiv 3 \; (\mathrm{mod}\; 4).
\end{array}\right.
\end{equation*}
If such a $\Gamma$ exists, we define the \emph{horizontal Dirac operator} $D_h$ by:
$$D_h\equiv {1\over 2}\Gamma[\Gamma,D]$$
\end{defin}

Now we want to consider a special class of projectable spectral triples, which should represent 
the noncommutative counterpart of smooth $U(1)$ principal bundles which are Riemannian manifolds
 with fibers of constant length.

\begin{defin}\label{defU1constlength}
 A projectable spectral triple \emph{has fibers of constant length} if there is a positive real number 
$l$ such that, if we set
$$D_v = {1\over l}\Gamma\delta,$$
the operator
$$Z = D - D_h - D_v$$
is a bounded operator which commutes with the representation of $\A$
\begin{equation}
\label{za}
[Z, \A] = 0.
\end{equation}
\end{defin}
In such a case, the operator $D_v$ is called the \emph{vertical Dirac operator}
and the number $l$, up to a $2\pi$ factor is the length of each fiber,  as in the 
commutative (smooth) case.
\begin{rmk}
Actually in \cite{DS10} a different condition on $Z$ was imposed, namely that $Z$ 
commutes with the elements $J\pi(a^*)J^{-1}$ from the commutant of $\A$.
Our present choice is satisfied e.g if $Z \in J\pi(\A)J^{-1}$, 
or if $Z$ is a related 0-order pseudodifferential operator, 
and is motivated by a recent example of curved noncommutative torus \cite{DS12}. 
The condition \ref{za} implies that
\begin{equation}
\label{projcalc}
[D_h,\pi(b)] = [D,\pi(b)],  \quad  \forall b\in\B
\end{equation}
and hence $D_h$ and $D$ induce the same first order differential calculus 
over the invariant subalgebra $\B$ which was an additional assumption of 
'projectability of differential calculus' in \cite{DS10}. $\quad\diamond$
\end{rmk}
Consider now a projectable triple $(\A,\H,D,J,\delta,\Gamma)$ of $KR$-dimension $j$, 
with $j$ odd, and assume that it has fibers of constant length. Since $\Gamma$ 
and $D$ commute with $\delta$, also $D_h$ does. 
Therefore $D_h$ preserves each $\H_k$. Instead, the real structure intertwines
$\H_k$ and $\H_{-k}$, $J\H_k\subseteq \H_{-k}$. In particular, it preserves $\H_0$. 
Now, let us denote, for any $k\in\Z$, by $D_k$, and $\gamma_k$ the restrictions 
to $\H_k$ of $D_h$ and $\Gamma$, respectively. 
For each $k\in\Z$ we define also an antiunitary operator 
$j_k : \H_k\rightarrow \H_{-k}$ as follows:
\begin{equation}\label{defj0}
j_k = \left\{\begin{array}{lcl}
\Gamma J & \quad & \mathrm{if}\; j \equiv 1 \; (\mathrm{mod}\; 4) \\
J & \quad & \mathrm{if}\; j \equiv 3 \; (\mathrm{mod}\; 4)
\end{array}\right.
\end{equation}
(where on the r.h.s.~appropriate restrictions of $\Gamma$ and $J$  are understood). 
Now we can state the following
\begin{prop}\label{propprojU1odd}
$(\B,\H_0,D_0,\gamma_0,j_0)$ and,  for $k\neq 0$, 
$(\B,\H_k\oplus\H_{-k}, D_k\oplus D_{-k}, \gamma_k\oplus\gamma_{-k}, j_k\oplus j_{-k})$, 
are even real spectral triples of $KR$-dimension $j-1$. 
\end{prop}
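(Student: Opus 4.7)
The plan is to verify, axiom by axiom, that each set of data is a real spectral triple of the claimed parity and $KR$-dimension. Throughout I invoke three structural facts: (a) $D_h$, $\Gamma$ and $\delta$ mutually commute, so each $\H_k$ is preserved by $D_h$ and $\Gamma$; (b) $J$ is antilinear and anticommutes with $\delta$, hence $J\H_k = \H_{-k}$, and in particular $\H_0$ is $J$-invariant; (c) $\pi(\A^{(k)})\H_l \subseteq \H_{k+l}$, so $\pi(\B)$ preserves each $\H_l$ and the representation axiom is automatic.

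For the analytic axioms I would use the decomposition $D_h = D - D_v - Z$. On $\H_0$ the piece $D_v = \tfrac{1}{l}\Gamma\delta$ vanishes and $Z$ is bounded, so $D_0$ is a bounded self-adjoint perturbation of $D|_{\H_0}$; on $\H_k\oplus\H_{-k}$ with $k\neq 0$, $D_v$ restricts to $\pm(k/l)\Gamma$, again bounded. In both cases compactness of the resolvent is inherited from $D$, and boundedness of $[D_0,b]$ for $b\in\B$ follows from \eqref{projcalc}. The grading properties $\gamma_k^*=\gamma_k$, $\gamma_k^2=1$ and $[\gamma_k,\pi(\B)|_{\H_k}]=0$ are immediate restrictions of the corresponding properties of $\Gamma$, while $\gamma_k D_k + D_k \gamma_k = 0$ comes directly from $D_h = \tfrac{1}{2}(D - \Gamma D \Gamma)$.

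The real-structure axioms are the main content, and the main obstacle is matching the signs $(\epsilon,\epsilon',\epsilon'')$ with $KR$-dimension $j-1$. I would run the two subcases $j\equiv 1$ and $j\equiv 3\pmod{4}$ in parallel, using $\Gamma J = \pm J\Gamma$ from Definition~\ref{defprojoddst}. The square $j_k^2$ reduces to $J^2 = \epsilon\,\id$ (up to a trivial $\Gamma^2=1$ factor), so $\epsilon_{j-1}=\epsilon_j$. For $\epsilon'$, a short calculation starting from $JD=\epsilon' DJ$ yields $JD_h = \epsilon' D_h J$; premultiplying by $\Gamma$ in the $j\equiv 1$ case and using $\Gamma D_h + D_h\Gamma = 0$ flips one sign, producing $j_k D_k = +D_{-k}j_k$ for every odd $j$. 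The sign $\epsilon''$ is immediate by inspection, $j_k\gamma_k = \pm\gamma_{-k}j_k$ with the sign inherited from $\Gamma J = \pm J\Gamma$. Checking $j=1,3,5,7$ against the sign table of Section~3 then confirms the expected entries for $KR$-dimension $j-1$.

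Finally, the order-zero and first-order conditions pass from $\A$ to $\B$ without new work: for $b\in\B$ one has $j_0 b^* j_0^{-1} = Jb^*J^{-1}|_{\H_0}$, since any extra factor of $\Gamma$ commutes with $\pi(\B)$ and $\Gamma^2=1$, and analogously on $\H_k\oplus\H_{-k}$. Combined with $[D,b]=[D_h,b]$ from \eqref{projcalc}, both conditions are inherited from the original real spectral triple $(\A,\H,D,J)$.
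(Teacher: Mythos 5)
Your proposal is correct and follows essentially the same route as the paper: verify axiom by axiom, in particular the sign relations for $(\epsilon,\epsilon',\epsilon'')$ that $j_0$, $D_0$, $\gamma_0$ must satisfy, splitting according to $j\bmod 4$ (which governs whether $\Gamma$ and $J$ commute or anticommute). The only cosmetic difference is that you derive the three sign identities once in closed form ($\epsilon_{j-1}=\epsilon_j$, $\epsilon'_{j-1}=+$, $\epsilon''_{j-1}$ equals the sign in $\Gamma J=\pm J\Gamma$) and then confirm them against the table, whereas the paper does the four cases $j=1,3,5,7$ as separate explicit computations (citing \cite{DS10} for $j=3$ and for the analytic parts such as compact resolvent, which you also correctly handle via the bounded-perturbation decomposition $D=D_h+D_v+Z$ restricted to $\H_0$ and $\H_k\oplus\H_{-k}$).
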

\begin{proof}
 We check here only the commutation relations of the operators $D_k,\gamma_k,j_k$ 
and the first order condition. For rest of the proof see \cite{DS10}, proposition 4.4. 
In fact we check these relation only on the subspace $\H_0$, since the extension 
of the computations below to the general case $k\in\Z$ is straightforward.

For $j=3$ the result is already proved in \cite{DS10}. 
Let us consider now the case $j = 1$: $[\Gamma,J] = 0$, $j_0 = \Gamma J$. 
We need to check that: $j_0^2 = 1$, $j_0D_0 = D_0j_0$, 
$\gamma_0j_0 = j_0\gamma_0$. We have:
$$j_0^2 = \gamma_0 J\gamma_0 J = \Gamma J\Gamma J = \Gamma^2 J^2 = 1,$$
\begin{equation*}
\begin{split}
j_0D_0 & = {1\over 2}\Gamma J \Gamma[\Gamma, D] 
= {1\over 2}\left(\Gamma J D -\Gamma J \Gamma D\Gamma \right) \\
& = {1\over 2}\left(\Gamma JD-JD\Gamma  \right) = 
{1\over 2}\left(DJ\Gamma - \Gamma DJ\right) 
= {1\over 2}\left(D\Gamma J - \Gamma DJ \right) = D_0j_0,
\end{split}
\end{equation*}
$$\gamma_0j_0 = \Gamma\Gamma J = \Gamma J \Gamma = j_0\gamma_0.$$
Now the case $j = 5$: $[\Gamma,J] = 0$, $j_0 = \Gamma J$. 
We need to check that $j_0^2 = -1$,  $j_0D_0 = D_0j_0$, $\gamma_0j_0 = j_0\gamma_0$. 
Since the only difference with the previous case 
is that now $J^2 = -1$, the proof of the last two relations is the same as before. 
For the first one:
$$j_0^2 = \gamma_0 J\gamma_0 J = \Gamma J\Gamma J = \Gamma^2 J^2 = -1.$$
We are left with the proof of the proposition for $j = 7$. 
In this case we have $j_0 = J$, 
$J\Gamma = -\Gamma J$, and we have to check that $j_0^2 = 1$, $j_0D_0 = D_0j_0$, 
$\gamma_0j_0 = -j_0\gamma_0$. We have:
$$j_0^2 = J^2 = 1,$$
\begin{equation*}
\begin{split}
j_0D_0 & = {1\over 2}J\Gamma[\Gamma, D] = {1\over 2}\left(JD-J\Gamma D\Gamma \right)\\
& = {1\over 2}\left(DJ-\Gamma D\Gamma J \right) = D_0j_0,
\end{split}
\end{equation*}
$$\gamma_0j_0 = \Gamma J = -J\Gamma = -j_0\gamma_0.$$

The first order condition follows directly from \eqref{projcalc}, 
and from the fact that $\Gamma$ commutes with $D$ and with the elements of $\B$,
and it either commutes or anticommutes with $J$.
\end{proof}

\subsection{Projectable spectral triples: even case}\label{ssectprojU1even}
Now we extend the notion of projectable spectral triple to the even dimensional case. 
We give the following definition, which, as we shall see later, is consistent with 
the results obtained in the commutative (smooth) case \cite{AB98,A98}.
\begin{defin}
 An even $U(1)$-equivariant real spectral triple $(\A,\H,D,J,\gamma,\delta)$ 
is said to be  \emph{projectable} along the fibers if there exists a $\Z_2$ grading 
$\Gamma$ on $\H$, which satisfies the following conditions,
$$\Gamma^2 = 1,\quad\quad \Gamma^* = \Gamma,$$
$$[\Gamma,\pi(a)]=0 \quad\forall a\in\A,$$
$$[\Gamma,\delta] = 0,$$
$$\Gamma\gamma = -\gamma\Gamma,$$
$$\Gamma J = -J \Gamma.$$
If such a $\Gamma$ exists, we define the \emph{horizontal Dirac operator} $D_h$ by
$$D_h\equiv {1\over 2}\Gamma[\Gamma, D]$$
\end{defin}

Also in this case we can introduce the property of constant length fibers, 
similarly as in definition \ref{defU1constlength}. Consider now a projectable triple 
$(\A,\H,D,J,\gamma,\delta,\Gamma)$ of $KR$-dimension $j$, 
with $j$ even, and assume that it has fibers of constant length. 
Again $D_h$ preserves each $\H_k$, 
and the real structure intertwines $\H_k$ and $\H_{-k}$, $J\H_k\subseteq \H_{-k}$. 
In particular, it preserves $\H_0$. Let us denote, for any $k\in\Z$, by $D_k$, 
and $\gamma_k$ the restrictions to $\H_k$ of $D_h$ and $\Gamma$, respectively. 
Define an operator $\nu$ by $\nu = i\Gamma\gamma$.
 Then $\nu^* = \nu$ and $\nu^2 = 1$, and we can use it to split $\H_0$. 
In particular we obtain the following result.
\begin{prop}\label{propdim2st}
Decompose $\H_0$ as $\H_0 = \H_0^{(+)}\oplus\H_0^{(-)}$, where $\H_0^{(\pm)}$ are the 
$(\pm 1)$-eigenspaces of $\nu$. Then the horizontal Dirac operator $D_h$ 
preserves both the subspaces $\H_0^{(\pm)}$. 
Moreover, if we denote respectively by $D_0^{(\pm)}$ the restrictions of $D_h$ 
to $\H_0^{(\pm)}$, $(\B,\H_0^{(\pm)},D_0^{(\pm)})$ are spectral triples.
\end{prop}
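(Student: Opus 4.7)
The plan is to proceed in three stages: first confirm $\nu$ really defines a $\Z_2$-grading on $\H_0$ compatible with the relevant structures; then show $D_h$ commutes with $\nu$, so it restricts to each eigenspace; finally verify the spectral triple axioms for the restricted data.

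First I would check, purely from the relations $\Gamma^*=\Gamma$, $\gamma^*=\gamma$, $\Gamma^2=\gamma^2=1$ and $\Gamma\gamma=-\gamma\Gamma$, that $\nu=i\Gamma\gamma$ is self-adjoint and satisfies $\nu^2=1$ (already stated in the text). Since $\Gamma$ commutes with $\delta$ by the definition of projectability and $\gamma$ commutes with $\delta$ by Definition 4.1, so does $\nu$; therefore $\nu$ preserves each $\H_k$, in particular $\H_0$, giving the claimed splitting $\H_0=\H_0^{(+)}\oplus\H_0^{(-)}$. Moreover $\nu$ commutes with $\pi(\A)$ (both $\Gamma$ and $\gamma$ do, the latter being part of the standard axioms for an even real spectral triple), so the decomposition is invariant under the action of $\B\subset\A$.

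The crux is the computation $[D_h,\nu]=0$. I would write $D_h=\tfrac{1}{2}(D-\Gamma D\Gamma)$ and use $\gamma D=-D\gamma$ together with $\Gamma\gamma=-\gamma\Gamma$, which gives in particular $\Gamma\gamma\Gamma=-\gamma$. A direct calculation then yields
\begin{equation*}
\nu D_h=\tfrac{i}{2}(\Gamma\gamma D-\Gamma\gamma\Gamma D\Gamma)=\tfrac{i}{2}(-\Gamma D\gamma+D\Gamma\gamma)=\tfrac{i}{2}(D\Gamma\gamma-\Gamma D\gamma)=D_h\nu,
\end{equation*}
so $D_h$ commutes with $\nu$ and hence preserves both eigenspaces $\H_0^{(\pm)}$, justifying the definition of $D_0^{(\pm)}$.

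It then remains to verify the three conditions in Definition 3.1 for $(\B,\H_0^{(\pm)},D_0^{(\pm)})$. Boundedness of the representation of $\B$ is immediate. Self-adjointness of $D_0^{(\pm)}$ follows from self-adjointness of $D_h=\tfrac{1}{2}(D-\Gamma D\Gamma)$ (both summands are self-adjoint) combined with the fact that $\H_0^{(\pm)}$ are closed invariant subspaces. For the compact resolvent, since $\delta$ vanishes on $\H_0$ we have $D_v|_{\H_0}=0$, so $D_h|_{\H_0}=(D-Z)|_{\H_0}$; because $D$ preserves $\H_0$ and has compact resolvent on $\H$, its restriction to $\H_0$ has compact resolvent, and the bounded perturbation by $-Z|_{\H_0}$ preserves this property, which is then inherited by the further restriction to $\H_0^{(\pm)}$. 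Finally, for $b\in\B$, relation (\ref{projcalc}) gives $[D_h,\pi(b)]=[D,\pi(b)]$, which is bounded by hypothesis; restricting to $\H_0^{(\pm)}$ preserves boundedness. I expect the only mildly subtle point to be the compact-resolvent argument, where one must remember that $D_v$ kills $\H_0$ and that $Z$ is bounded, reducing the question to the spectral triple axiom already known for $D$.
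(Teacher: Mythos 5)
Your proof is correct and follows essentially the same route as the paper: verify that $\nu$ defines an invariant grading, compute $[D_h,\nu]=0$, and then check boundedness of commutators via the relation $[D_h,\pi(b)]=[D,\pi(b)]$ for $b\in\B$. The one place where you go beyond the paper is the compact resolvent step: the paper simply cites remark 4 of \cite{DS10}, whereas you give a self-contained argument by observing that on $\H_0$ the operator $\delta$ (hence $D_v$) vanishes, so $D_h|_{\H_0}=(D-Z)|_{\H_0}$ is a bounded perturbation of the restriction of $D$ to a closed invariant subspace, which inherits compact resolvent from $D$ and passes it down to $\H_0^{(\pm)}$. This is a cleaner and more transparent justification than a reference; note it also implicitly handles the self-adjointness of $D_0^{(\pm)}$ (as a bounded self-adjoint perturbation of a self-adjoint operator on a reducing subspace), which is a safer route than asserting self-adjointness of the formal difference $\tfrac12(D-\Gamma D\Gamma)$, where domain issues could in principle intervene. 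The rest of the argument, including the commutator computation $[D_h,\nu]=0$, matches the paper's calculation step for step up to cosmetic rearrangement.
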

\begin{proof}

Clearly $D_h$ preserves $\H_0$, so for the first part of the proposition 
that $D_0$ preserves $\H_0^{(\pm)}$ we need only to check that $[D_0,\nu] = 0$.
Indeed
\begin{equation*}
\begin{split}
[D_0,\nu] &= {1\over 2}[\Gamma[\Gamma, D],\Gamma\gamma] \\
&= {1\over 2}\left(D\Gamma\gamma + \Gamma\gamma\Gamma D\Gamma - \Gamma D\gamma
- \Gamma\gamma D \right) \\
& = {1\over 2}\left(\Gamma\gamma D + D\gamma\Gamma - D\gamma\Gamma - 
\Gamma\gamma D \right) = 0.
\end{split}
\end{equation*}
Next since $\Gamma$ and $\gamma$ commute with $\B$, and $[D,b]$ is bounded 
since $\B\subset \A$, we have that $[D_0,b]$ is bounded for any  $b\in\B$. 
Of course, both $\H_0^{(\pm)}$ are preserved by $\B$. So $(\B,\H^{(\pm)},D_0^{(\pm)})$, 
where $D_0^{(\pm)}$ are the restrictions of $D_0$, are spectral triples.

Moreover, it follows from remark 4 of \cite{DS10} that the Dirac operators $D_0^{(\pm)}$ 
have compact resolvent.
\end{proof}
\begin{rmk}\label{rmkorientchange}
 Let $(\A,\H,D,J,\gamma,\delta,\Gamma)$ as in the previous proposition. 
Notice that $\Gamma\nu = -\nu\Gamma$, so that $\Gamma\H_0^{(\pm)}\subset \H_0^{(\mp)}$. 
Since $\Gamma^2 = 1$, $\Gamma$ determines an isomorphism 
$\Gamma : \H_0^{(+)}\rightarrow \H_0^{(-)}$. Moreover, one can easily see that
 $D_h\Gamma = -\Gamma D_h$. So, $D_0^{(+)} = - D_0^{(-)}$ with respect to the isomorphism 
$\H_0^{(+)}\simeq \H_0^{(-)}$ determined by $\Gamma$. This is nothing else than the noncommutative 
counterpart of the fact that, in the smooth case, the two Dirac operators $D_0^{(\pm)}$ are associated 
to the same metric, but they differ by a different choice of orientation \cite{AB98,A98}. 
So we can say that the two triples $(\B,\H^{(\pm)},D_0^{(\pm)})$ differ only by 
 \emph{the choice of (the sign of) the orientation}.
$\quad\diamond$ \end{rmk}
Now we can check if the spectral triples on $\B$ given by the previous proposition are real. 
We start with the $KR$-dimension 2 case.
\begin{prop}\label{propdim2}
 Let $(\A,\H,D,J,\gamma,\delta,\Gamma)$ be a projectable real spectral triple of $KR$-dimension 2, 
fulfilling the condition of fibers of constant length. Then the antiunitary operator $\gamma J$ preserves 
both the subspaces $\H_0^{(\pm)}$. Moreover, if we denote by $j_0^{(\pm)}$ the restrictions 
of $\gamma J$ to $\H_0^{(\pm)}$ respectively, 
then $(\B,\H_0^{(\pm)},D_0^{(\pm)},j_0^{(\pm)})$ are real spectral triples 
of $KR$-dimension 1, and they differ just by the change of the sign of the orientation 
(see previous remark).
\end{prop}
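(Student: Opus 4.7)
The $KR$-dimension of the triple on the base is to be 1, so I need to produce an antiunitary $j_0^{(\pm)}$ with $(j_0^{(\pm)})^2=+1$ and $j_0^{(\pm)}D_0^{(\pm)}=-D_0^{(\pm)}j_0^{(\pm)}$. Since $\gamma$ is unitary selfadjoint and $J$ is antiunitary, the natural candidate is $j_0:=\gamma J$; the strategy is to show (i) that $\gamma J$ preserves the $\nu$-eigenspaces $\H_0^{(\pm)}\subset\H_0$, (ii) that it satisfies the $KR$-dim $1$ axioms with respect to $D_h$, and (iii) that the first order condition is inherited from the total space.

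First I would check $[\gamma J,\nu]=0$. Starting from $\nu=i\Gamma\gamma$ and using the antilinearity of $J$ (so $Ji=-iJ$) together with the commutation rules $J\Gamma=-\Gamma J$, $J\gamma=-\gamma J$ from the $KR$-dim $2$ table and the projectability definition, and the auxiliary identity $\gamma\Gamma\gamma=-\Gamma$ (from $\Gamma\gamma=-\gamma\Gamma$ and $\gamma^2=1$), a short direct computation yields $(\gamma J)\nu=i\Gamma J=\nu(\gamma J)$. Consequently $\gamma J$ restricts to an antiunitary $j_0^{(\pm)}$ on each $\H_0^{(\pm)}$.

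Next I would verify the two $KR$-dim $1$ axioms. The relation $(\gamma J)^2=\gamma(J\gamma)J=-\gamma^2 J^2=-(1)(-1)=+1$ gives $\epsilon=+1$. For $j_0 D_0=-D_0 j_0$, I would write $D_h=\tfrac12(D-\Gamma D\Gamma)$ and push $\gamma J$ past each summand using $JD=DJ$, $D\gamma=-\gamma D$, $\Gamma J=-J\Gamma$, $\Gamma\gamma=-\gamma\Gamma$; the accumulated signs produce $(\gamma J)D_h=-D_h(\gamma J)$, which restricts to $j_0^{(\pm)}D_0^{(\pm)}=-D_0^{(\pm)}j_0^{(\pm)}$, matching $\epsilon'=-1$. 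This sign-chase is the one step where I expect to have to be careful, as several anticommutation rules must be applied in sequence.

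For the first order condition I would exploit the identity $j_0 c^* j_0^{-1}=Jc^*J^{-1}$ on $\H$ (moving $\gamma$ through $J$, past $c^*\in\B$ which commutes with $\gamma$ by projectability, and through $J^{-1}$, collapses the two sign changes against $\gamma^2=1$). Combined with equation \eqref{projcalc}, which states $[D_h,b]=[D,b]$ for $b\in\B$, the desired relation $[[D_0^{(\pm)},b],j_0^{(\pm)}c^*(j_0^{(\pm)})^{-1}]=0$ reduces to the first order condition of $(\A,\H,D,J)$ restricted to $\H_0^{(\pm)}$. Finally, for the orientation claim I would invoke Remark~\ref{rmkorientchange}, which already provides the isomorphism $\Gamma:\H_0^{(+)}\to\H_0^{(-)}$ with $\Gamma D_0^{(+)}=-D_0^{(-)}\Gamma$; together with $\Gamma(\gamma J)=(\gamma J)\Gamma$ (the signs from $\Gamma\gamma=-\gamma\Gamma$ and $\Gamma J=-J\Gamma$ cancel), this realises the two real spectral triples as isomorphic up to reversing the sign of $D$, that is, up to a change of the sign of the orientation.
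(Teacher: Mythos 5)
Your proof is correct and follows essentially the same route as the paper's: both check $[\gamma J,\nu]=0$ to justify the restriction, verify $(\gamma J)^2=1$ and $(\gamma J)D_h=-D_h(\gamma J)$ for the $KR$-dimension~1 signs, inherit the (zeroth and) first order condition from the total space via the projectability identity \eqref{projcalc}, and invoke Remark~\ref{rmkorientchange} for the orientation claim. The only cosmetic difference is that you compute $[\gamma J,\nu]=0$ directly, whereas the paper derives it from the observation that $\gamma J$ commutes with $\Gamma$ (and anticommutes with $\gamma$); both are one-line sign chases that match.
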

\begin{proof}
We know that both $J$ and $\gamma$ preserve $\H_0$ and if $j_0$ is the restriction 
of $\gamma J$ to $\H_0$ we see that $j_0$ preserves $\H_0^{(\pm)}$, 
since $[j_0,\nu] = 0$. 
We have also that $j_0$ commutes with $\Gamma$, since the spectral triple is projectable and has $KR$-dimension 2. 
Moreover, compute 
$$D_0j_0 = {1\over 2}\left(D\gamma J- \Gamma D\Gamma\gamma J\right) = - j_0D_0,$$
as it should in $KR$-dimension 1.

Next since $\gamma$ commutes with $\A$, it is immediate that $j_0$ maps $\B$ 
to its commutant. Since $-J^2 = \gamma^2 = 1$ and $J\gamma = -\gamma J$, 
we have also 
$j_0^2 = 1$. 
Thus $j_0$, and also $j_0^{(\pm)}$, fulfill all the commutation relations required for a real structure of 
a real spectral triple of $KR$-dimension 1. Moreover, the first order condition follows from \eqref{projcalc}
and from the first order condition for the spectral triple over $\A$.

The last statement of the proposition follows from the fact that $\Gamma$ intertwines the two triples,
 as shown in remark \ref{rmkorientchange}.
\end{proof}

In order to extend the result of proposition \ref{propdim2} to higher dimensional even spectral triples
we give the following definition.
\begin{defin}\label{defJ0}
 Let $(\A,\H,D,J,\gamma,\delta,\Gamma)$ be an even dimensional projectable real spectral triple 
of KR-dimension $j$. Then we define a real structure $j_0$ on $\H_0$ by:
\begin{equation}\label{J0}
 \begin{array}{ccc|c|c|c}
  \mathrm{KR-dim} & \quad & 0 & 2 & 4 & 6 \\
  j_0  & \equiv & J & \gamma J & J & \gamma J
 \end{array}
\end{equation}
where the restriction of $\gamma$ and $J$ to $\H_0$ is understood.
\end{defin}
With this definition of $j_0$ we can prove the following result, which is a generalization 
of proposition \ref{propdim2}.
\begin{prop}\label{propdimgen}
 Let $(\A,\H,D,J,\gamma,\delta,\Gamma)$ be a projectable even real spectral triple 
of $KR$-dimension $j$, 
fulfilling the constant length fibers condition. Let $j_0 : \H_0\rightarrow \H_0$ be given by \eqref{J0}. 
Then $j_0$ restricts to $\H_0^{(\pm)}$, and $(\B,\H_0^{(\pm)},D_0^{(\pm)},j_0^{(\pm)})$ are real spectral
 triples of $KR$-dimension $(j-1)$. Moreover, they differ just by the change of the sign of the orientation.
\end{prop}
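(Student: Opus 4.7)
The plan is to imitate the proof of Proposition \ref{propdim2}, splitting into the four cases $j\in\{0,2,4,6\}$ and checking case by case that the antiunitary $j_0$ prescribed by Definition \ref{defJ0} is a real structure on $\H_0^{(\pm)}$ of the correct $KR$-dimension $j-1\pmod 8$. Several ingredients are already in place and will be reused: Proposition \ref{propdim2st} supplies the fact that $D_0^{(\pm)}$ are the Dirac components of spectral triples on $\H_0^{(\pm)}$, the identity \eqref{projcalc} ensures that the first-order condition descends from $(\A,\H,D,J,\gamma)$ to the reduced data, and Remark \ref{rmkorientchange} already exhibits $\Gamma$ as the isomorphism $\H_0^{(+)}\to\H_0^{(-)}$ that intertwines the spectral data up to the sign of the Dirac operator. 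What remains is the purely algebraic list of axioms of a real structure.

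First I would check that $j_0$ restricts to each eigenspace of $\nu=i\Gamma\gamma$ by computing $[j_0,\nu]$ directly for the two prescriptions $j_0 = J$ (when $j\equiv 0, 4$) and $j_0=\gamma J$ (when $j\equiv 2, 6$), using $\Gamma J=-J\Gamma$, $\Gamma\gamma=-\gamma\Gamma$, $\gamma^2=1$ and $J\gamma=\epsilon''\gamma J$. The antilinearity of $j_0$ produces a sign from the $i$ in $\nu$ which, in each of the four cases, is cancelled by the signs coming from the commutators of $\Gamma$ and $\gamma$ with $J$; Definition \ref{defJ0} is rigged so that this cancellation occurs in every case.

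Next I would verify the algebraic relations $j_0^2 = \epsilon_{j-1}\id$ and $j_0 D_0^{(\pm)} = \epsilon'_{j-1}\, D_0^{(\pm)} j_0$ by the same four-term expansion used in the proof of Proposition \ref{propprojU1odd}: the square follows immediately from $J^2=\epsilon_j\id$, $\gamma^2=1$, $J\gamma=\epsilon''\gamma J$, while the commutation with $D_0=\tfrac12\Gamma[\Gamma,D]$ follows by pushing $j_0$ past $\Gamma$ (sign $-1$) and past $D$ (sign $\epsilon'_j$, with an extra $-1$ if the $\gamma$ factor is present). The fact that $j_0\B j_0^{-1}\subset\B'$ follows from $\gamma$ commuting with $\A\supset \B$ and $\Gamma$ commuting with $\B$, so the argument reduces to the one used for $j=2$ in Proposition \ref{propdim2}.

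Finally, the orientation-reversal statement is obtained exactly as in Remark \ref{rmkorientchange}: $\Gamma$ anticommutes with $\nu$, hence swaps $\H_0^{(\pm)}$, and anticommutes with $D_h$; using $\Gamma J=-J\Gamma$ and $\Gamma\gamma=-\gamma\Gamma$ one sees that $\Gamma$ also intertwines $j_0^{(+)}$ with $j_0^{(-)}$ up to a sign, so the two reduced real spectral triples differ only in the sign of $D_0$. The main obstacle I foresee is purely organizational: one must keep the various signs straight across the four values of $j$ and the two admissible rows of the sign table for each, and confirm in every instance that the product of signs produced by the four-term expansion matches the entry $\epsilon'_{j-1}$ demanded by the target $KR$-dimension.
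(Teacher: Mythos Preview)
Your proposal is correct and follows essentially the same route as the paper: reduce to the case $j=2$ already handled in Proposition~\ref{propdim2}, then for $j\in\{0,4,6\}$ check case by case that $j_0^2=\eps_{j-1}$, that $[j_0,\nu]=0$, and that $j_0$ and $D_0$ obey the sign $\eps'_{j-1}$, with the first-order condition and orientation-reversal inherited from \eqref{projcalc} and Remark~\ref{rmkorientchange}. The only minor difference is that the paper shortcuts the $j_0$--$D_0$ commutation by observing directly that $D_h$ inherits the same sign relation with $J$ (resp.\ $\gamma J$) as $D$ does, rather than expanding $\tfrac12\Gamma[\Gamma,D]$ as you propose; both arguments amount to the same two-line computation.
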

\begin{proof}
 We have already discussed the case $n = 2$. So we prove the proposition separately in the other 
three cases. All what we need to check is that $j_0^2 = \pm 1$ accordingly to $KR$-dimension $(j-1)$,
 that $[j_0,\nu]$ = 0, and that $D_0^{(\pm)}$ and $j_0^{(\pm)}$ satisfy the correct commutation relations
 (see table 1); the other properties (like the first order condition) are fulfilled for the same reasons 
as the previous proposition. The first condition is easily checked.

Let us check that $j_0$ commutes with $\nu$.
Let $j = 4$. Then $j_0 = J$ and
$$[j_0,\nu] = [J,i\Gamma\gamma] = [J,i\Gamma]\gamma = 0.$$
Let $j = 6$. Then $j_0 = \gamma J$ and
$$[j_0,\nu] = [\gamma J,i\Gamma\gamma] = \gamma[J,i\Gamma]\gamma = 0.$$
Finally, let $j = 0$. Then $j_0 = J$ and
$$[j_0,\nu] = [J,i\Gamma\gamma] = [J,i\Gamma]\gamma = 0.$$
Now we have to check the commutation relation between $j_0$ and $D_0$. But we notice that the
 commutation relations between $D_0$ and $j_0$ are the same of those between $j_0$ and $D$. 
So, if $j = 0$ or $j = 4$ then $j_0 = J$ and thus $j_0D_0 = D_0j_0$, and it is consistent 
with the requirements, respectively, of $KR$-dimension 7 and 3.if $j = 6$ then 
$j_0 D_0 = -D_0 j_0$, as it should be in $KR$-dimension 5.
\end{proof}
We conclude this section pointing out that, as for the odd dimensional case, 
for $k\neq 0$ we can define real spectral triples 
$(\B,\H_k\oplus \H_{-k}, D_k\oplus D_{-k}, j_k\oplus j_{-k})$
 of $KR$-dimension $j-1$,
simply extending the construction discussed above for the $k=0$ case.

\section{Twisted spectral triples}\label{tst}

In this section we shall use the strong connection on the circle bundle to twist the horizontal Dirac 
operator. We discuss here only the case of even spectral triples, for the reason that
the odd dimensional case, which has less properties to check, follows immediately. The latter 
was discussed also in details in \cite{DS10}.

\subsection{Twisted Dirac operators}\label{tDo}
Let the data $(\A,\H,J,D,\gamma,\delta,\Gamma)$ be a $U(1)$-equivariant projectable even real spectral
 triple, with constant length fibers, over a quantum principal $U(1)$-bundle $\B \hookrightarrow \A$, and assume the Dirac calculus $\Omega^1_D(\A)$ to be compatible with the de Rham calculus on $H$ (here $H$ is the coordinate algebra of $U(1)$).
Let $\omega\in\Omega^1_D(\A)$ be a strong connection form.
Let us notice that, for any $k\in\Z$, the set $\A^{(k)}$, acting on the right on $\H$ via the right action induced by the real structure $J$ 
(see eq. \eqref{JaJ}), can be regarded as a set of bounded left
 $\B$-linear maps between $\H_0^{(\pm)}$ and $\H_k^{(\pm)}$ (where, we recall, 
the $(\pm)$-decomposition is done accordingly to $\nu^2 = 1$). 
We introduce a new notation: we write the action of $\A^{(k)}$ on the right, that is $m(h) \equiv hm$. 
Then the $\B$-linearity reads $$(bh)a = b(ha).$$
%
We observe that from the fact that 
$\A$ is a principal comodule algebra, it follows that
for any $k\in\Z$, $\A^{(k)}$ is a projective $\B$-module,
which can be easily seen to be finitely generated.\\
This structure is compatible with the right $\B$-module structure induced on $\H$ 
by the real structure $J$:
$$(ba)(h) = a(hb),\quad\;\;\forall a\in \A^{(k)}, b\in\B, h\in\H,$$
which in the right-handed notation becomes
$$h(ba) = (hb)a.$$

Consider the left $H$-comodule $(V,\rho_L)$, where $V = \C$ and $\rho_L(\lambda) = z\otimes \lambda$. For any $k\in\Z$ define the $H$-comodule $(V^k,\rho_L^k)$ by setting $V^k = \C$, $\rho_L^k(\lambda) = z^k\otimes\lambda.$ Then it is straightforward to see that $\A^{(k)}$ is isomorphic to $\A\Box_H V^k$ (where $\Box_H$ denotes the cotensor product over $H$).
It follows that $\A^{(k)}$ is a quantum associated bundle (see e.g. \cite{BF12}).

We introduce now two assumptions:
\begin{description}
\item[(i)] 
$\H_0 \A^{(k)} \equiv \A^{(k)}(\H_0)$ is dense in $\H_k$, where $\A^{(k)}(\H_0)$ is the linear span of elements $a(h)$, $a\in \A^{(k)}$, $h\in\H_0$;
\item[(ii)] the multiplication map from $\H_0\otimes_\B \A^{(k)}$ to $\H_k$ is an isomorphism.
\end{description}

The first order condition \eqref{firstordcond} implies that there is a right action of $\Omega^1_D(\B)$ on $H$, given by:
\begin{equation}\label{homega}
h\omega = - J\omega^*J^{-1}h\quad\;\;\forall \omega\in\Omega^1_D(\B),
\end{equation}
where $\omega^*$ is the adjoint of $\omega$, s.t. $([D,b])^* = -[D,b^*]$ and
$$h[D,b] = D(hb) - (Dh)b.$$
Such an action is clearly left $\B$-linear. Moreover, it induces a left action of 
$\Omega^1_D(\B)$ on $\A^{(k)}$, and $\Omega^1_D(\B)\A^{(k)}$ is just the space of all compositions
 $a\circ \omega$ of left $\B$-linear maps. For further details we refer to \cite{DS10}.
\begin{rmk}\label{rmkj0}
 As we have seen in the previous sections, the real structure which makes the triple over $\B$ 
a real spectral triple is not always the simple restriction of $J$ to $\H_0$. Nevertheless both $J$ 
and the collection of $j_k$ induce the same right action of $\A$ on $\H$. Instead 
in order to define the right action of $\Omega^1_D(\B)$ on $\H_0$ 
we shall use the real structure $j_0$, to be consistent with \ref{homega}.
In this case it would not be correct to use $J$, since in general its commutation relation with $D_0$ 
is different from that of $j_0$, and the representation of a differential form involves the Dirac operator.
$\quad\diamond$ \end{rmk}

Now we come to the construction of twisted spectral triples over $\B$.
Using the $D_0^{(\pm)}$-connection $\nabla_\omega$ on the left $\B$-module $\A^{(k)}$, we twist the Dirac operators $D_0^{(\pm)}$ as follows
\begin{equation}\label{eqDomegak}
D_\omega^{(k,\pm)}(ha) = (D_0^{(\pm)}h)a + h \nabla_\omega (a)
\end{equation}
Note that as explained in \cite{DS10}, due to (i) and (ii), $D_\omega^{(k,\pm)}$ depends only 
on the product $ha\in \H_k$.

Let now $D_\omega^{(\pm)}$ be the respective closures of the direct sums of the two families
$D_\omega^{(k,\pm)}$. In order to regard them as two twisted Dirac operators 
on $\H$ we need another assumption:\\ 
{\bf (iii)} given $Z$, there exists bounded selfadjoint $Z'$
such that
$$
(Zh)a=Z'(ha),\quad \forall h\in\H , a\in \A.
$$  
Note that it is weaker than assuming that $[Z,J\A J^{-1}]=0$.

\begin{prop}
The  operators $D_\omega^{(\pm)}$ are selfadjoint if $\omega$ is a selfadjoint one form, 
and they have bounded commutators with all the elements of $\A$.
\end{prop}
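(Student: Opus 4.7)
The plan is to work piece by piece on each graded subspace and then assemble via direct sums. Assumptions (i) and (ii) give the identification $\H_k^{(\pm)}\cong\H_0^{(\pm)}\otimes_\B\A^{(k)}$, so $D_\omega^{(k,\pm)}$ is a well-defined densely-defined operator on $\H_k^{(\pm)}$, built in a standard way from the base Dirac $D_0^{(\pm)}$ and the $D_0^{(\pm)}$-connection $\nabla_\omega$ on the projective left $\B$-module $\A^{(k)}$.

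Symmetry of $D_\omega^{(k,\pm)}$ is the first substep: one expands the pairing $\langle D_\omega^{(k,\pm)}(h\otimes a),h'\otimes a'\rangle$ via \eqref{eqDomegak} and matches it against $\langle h\otimes a,D_\omega^{(k,\pm)}(h'\otimes a')\rangle$. The $D_0^{(\pm)}$-summands cancel by selfadjointness of $D_0^{(\pm)}$, established in Propositions~\ref{propprojU1odd} and \ref{propdimgen}, and the $\nabla_\omega$-summands cancel by hermiticity of $\nabla_\omega$, which Lemma~5.5 of \cite{DS10} guarantees when $\omega=\omega^*$; the right $\B$-linearity $(bh)a=b(ha)$ ensures everything descends to the balanced tensor product over $\B$. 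For full selfadjointness I would exhibit $D_\omega^{(\pm)}$ as a bounded symmetric perturbation of $D_h|_{\H^{(\pm)}}$, which is itself selfadjoint as the direct sum of the $D_k^{(\pm)}$. The required identity comes from the Leibniz-type relation $D(ha)=(Dh)a+h[D,a]$ (valid by the first-order condition together with the bimodule conventions of \eqref{homega} and Remark~\ref{rmkj0}) combined with $D=D_h+D_v+Z$, the vanishing $D_vh=0$ on $\H_0$, assumption (iii) in the form $(Zh)a=Z'(ha)$, and the crucial observation that the vertical field condition of Definition~\ref{defcon} forces $\omega$ to contain a $\tfrac{1}{l}\Gamma$-contribution modulo horizontal pieces, so that the $\delta(a)\omega$ summand in $\nabla_\omega(a)$ cancels exactly the unbounded $D_v$-contribution at level $k$. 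What remains is bounded, uniformly in $k$ thanks to $[\delta,\omega]=0$, and symmetric by the first substep, so Kato--Rellich yields selfadjointness of $D_\omega^{(\pm)}$.

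Boundedness of $[D_\omega^{(\pm)},a]$ for $a\in\A$ reduces, via the same decomposition, to boundedness of $[D_h,a]=[D,a]-[D_v,a]-[Z,a]$: the first summand is bounded by the spectral triple hypothesis on $(\A,\H,D)$, the second equals $\tfrac{1}{l}\Gamma\,\delta(a)$, which is bounded since $\delta(a)\in\A$ acts boundedly on $\H$, and the third vanishes by \eqref{za}. The main obstacle is the selfadjointness step, where one must verify that all the unbounded pieces — the vertical Dirac $D_v$ at each level $k$, the multiplication by $\delta(a)=ka$ on $\A^{(k)}$, and the right action $h\mapsto h\omega$ (represented through $J$ or through $j_0$ as in Remark~\ref{rmkj0}) — conspire to leave only a bounded symmetric remainder, with all signs faithfully tracked through the $KR$-dimension-dependent relations of $\epsilon,\epsilon',\epsilon''$.
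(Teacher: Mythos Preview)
Your chain of ingredients (Leibniz rule $D(ha)=(Dh)a+h[D,a]$, the split $D=D_h+D_v+Z$, $D_vh=0$ for $h\in\H_0$, and assumption (iii)) is exactly what the paper uses, but you draw the wrong conclusion from it. If you carry the computation through you get
\[
D_\omega^{(\pm)}(hp)=\bigl(D+j_0\omega^*j_0^{-1}\delta - Z'\bigr)(hp),
\]
which is the identity \eqref{eqDomega} in the paper. The base operator is $D$, \emph{not} $D_h$. Your claim that the vertical field condition forces a cancellation leaving $D_\omega^{(\pm)}-D_h$ bounded ``uniformly in $k$'' is false: on $\H_k$ one has $D_\omega-D_h=\tfrac{k}{l}\Gamma+k\,j_0\omega^*j_0^{-1}+Z-Z'$, whose norm grows linearly in $k$ unless $\omega$ happens to equal $\tfrac{1}{l}\Gamma$ (the trivial connection). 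The vertical field condition $\sum p_i\delta(q_i)=1$ does imply $\omega=\tfrac{1}{l}\Gamma+\omega_h$ with $\omega_h\in\Omega^1_{D_h}(\A)$, but the horizontal piece $\omega_h$ still couples to $\delta$ through $j_0\omega_h^*j_0^{-1}\delta$, and that term is unbounded. So Kato--Rellich with $D_h$ as base does not apply.

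The paper instead reads selfadjointness directly off the global formula: $D$ is selfadjoint, $Z'$ is bounded selfadjoint, $\omega^*=\omega$ makes $j_0\omega^*j_0^{-1}$ bounded selfadjoint, and $\delta$ is selfadjoint and commutes with $D$; so $D+j_0\omega^*j_0^{-1}\delta-Z'$ is selfadjoint by standard facts about commuting selfadjoint operators plus a bounded perturbation. For bounded commutators the paper again uses the formula termwise: $[D,a]$ is bounded by hypothesis, $[j_0\omega^*j_0^{-1}\delta,a]=j_0\omega^*j_0^{-1}\delta(a)$ is bounded by the first-order condition, and $[Z',a]$ is a commutator of bounded operators. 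Your reduction to $[D_h,a]$ is correct as far as it goes, but it presupposes that $D_\omega-D_h$ has bounded commutator with $\A$, which you cannot get from your (false) boundedness claim; you would need the explicit formula anyway.
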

\begin{proof}
Take $h\in\H_0^{(\pm)}$ and $p\in\A^{(k)}$ such that $hp$ is in the domain of $D_\omega^{(\pm)}$. 
Then we have:
\begin{equation}\label{eqDomega}
\begin{split}
D_\omega^{(\pm)} & = (D_0^{(\pm)}h)p + h[D,p] - khp\omega \\
& = (D_0^{(\pm)}h)p + [D,j_0p^*j_0^{-1}]h + j_0\omega^*j_0^{-1}khp \\
& = D(hp) + ((D_0^{(\pm)} - D)h)p + j_0\omega^*j_0^{-1}\delta(hp) \\
& = (D + j_0\omega^*j_0^{-1}\delta - Z')(hp).
\end{split}
\end{equation}
From \eqref{eqDomega} follows, by standard results of functional analysis, the selfadjointness of $D_\omega$. Next, $D$ has bounded commutator with each $a\in\A$; $\omega$ is a one-form and so, due to the first order condition, the commutator of the second term of \eqref{eqDomega} with $a\in\A$ is simply $j_0\omega^*j_0^{-1}\delta(a)$ and hence is bounded. The commutator with the first term is bounded simply because it is the commutator of two bounded operators. We conclude then that $[D_\omega,a]$ is bounded $\forall a\in\A$.
\end{proof}
This shows that $(\B,\H^{(\pm)},D_\omega^{(\pm)})$ is a (twisted) spectral triple (non necessarily real).
Notice that the $\pm$ label corresponds just to different choice of the orientation, as follows from 
remark \ref{rmkorientchange}.
Moreover, since $D_0^{(\pm)}$-connection $\nabla_\omega$ on the left $\B$-module $\A$, 
restricts to $D_0^{(\pm)}$-connections $\nabla_\omega$ on each $\B$-module $\A^{(k)}$,
it is clear that $(\B,\H_k^{(\pm)},D_\omega^{(k,\pm)})$, $k\in\Z$, 
is a family of twisted spectral triples, obtained by suitable restrictions.

\begin{cor}
The ``full'' Dirac operator $D_\omega:=D_\omega^{(+)}\oplus D_\omega^{(-)}$  is selfadjoint if $\omega$ is selfadjoint and has bounded commutator with all the elements of the algebra $\A$.
\end{cor}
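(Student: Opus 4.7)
The corollary is essentially an immediate bookkeeping consequence of the preceding proposition, so the plan is to assemble the two summands rather than to rerun the analytic arguments. The key observation is that the proof of the proposition already produced the explicit formula
$$D_\omega^{(\pm)} = D + j_0 \omega^* j_0^{-1} \delta - Z'$$
as operators on $\H^{(\pm)}$. These two expressions are just the restrictions of one and the same operator on $\H^{(+)}\oplus\H^{(-)}$, namely
$$D_\omega = D + j_0\omega^* j_0^{-1}\delta - Z',$$
where $j_0$ on each summand is the appropriate real structure $j_0^{(\pm)}$ from Definition \ref{defJ0}. So the ``full'' operator is nothing but this globally defined expression, and the $\oplus$ decomposition is only a convenient labelling reflecting the $\nu$-splitting.

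For selfadjointness I would simply invoke the fact that an orthogonal direct sum of selfadjoint operators is selfadjoint on the direct sum of the domains. Since the proposition guarantees that each $D_\omega^{(\pm)}$ is selfadjoint whenever $\omega^* = \omega$, the statement for $D_\omega$ is immediate. Alternatively one may read it off directly from the global formula: $D$ is selfadjoint, $\delta$ is selfadjoint, $Z'$ is bounded and selfadjoint by assumption (iii), and $j_0\omega^*j_0^{-1}$ is selfadjoint because $j_0$ is antiunitary and $\omega$ selfadjoint; together with the earlier commutation/anticommutation relations of $\delta$ with $j_0$ and $Z'$ this gives selfadjointness of the sum by standard perturbation arguments (Kato--Rellich), exactly as in the proposition.

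For the bounded commutator property, for any $a\in\A$ I would write
$$[D_\omega,a] = [D,a] + [j_0\omega^* j_0^{-1}\delta, a] - [Z',a].$$
The first commutator is bounded because $(\A,\H,D)$ is a spectral triple. The last is bounded because $Z'$ and $a$ are both bounded operators. For the middle term, the first-order condition \eqref{firstordcond} makes $j_0\omega^* j_0^{-1}$ commute with $\A$, so that term reduces to $j_0\omega^* j_0^{-1}\delta(a)$, which is a bounded operator since $\delta(a)\in\A$ acts boundedly. Hence $[D_\omega,a]$ is bounded for every $a\in\A$.

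The only mildly delicate point, which I would flag explicitly rather than spend effort on, is that a generic $a\in\A$ need not preserve the $\nu$-decomposition $\H=\H^{(+)}\oplus\H^{(-)}$, so $[D_\omega^{(+)},a]$ and $[D_\omega^{(-)},a]$ are not literally defined on their nominal subspaces; this is why working with the global expression $D + j_0\omega^* j_0^{-1}\delta - Z'$ is the right way to formulate the commutator. Given that, there is no genuine obstacle: the corollary is just a restatement of the proposition at the level of the direct sum, and the argument is essentially complete once the global formula is written down.
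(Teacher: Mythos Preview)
Your argument is correct and matches the paper's treatment, which simply states the corollary without proof as an immediate consequence of the preceding proposition; your write-up just spells out the bookkeeping explicitly via the global formula $D_\omega = D + j_0\omega^*j_0^{-1}\delta - Z'$.

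One small correction: the ``mildly delicate point'' you flag is not actually an issue. Since $\nu = i\Gamma\gamma$ and both $\Gamma$ and $\gamma$ commute with the representation of $\A$ (the former by the definition of projectability, the latter because $\gamma$ is the grading of an even spectral triple), every $a\in\A$ \emph{does} preserve the decomposition $\H = \H^{(+)}\oplus\H^{(-)}$. Hence $[D_\omega^{(\pm)},a]$ is perfectly well defined on each summand, and the direct-sum argument goes through without needing the global-formula workaround you propose (though that workaround is also valid).
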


Finally, with  $Z$ as in definition \ref{defU1constlength}
and the assumption (iii) above, we define
$$\mathcal{D}_\omega := \Gamma\delta + D_\omega.$$
\begin{prop}
$(\A,\H,\mathcal{D}_\omega)$ is a projectable spectral triple with constant length fibers and the horizontal part of the operator $\mathcal{D}_\omega$ coincides with $D_\omega$.
\end{prop}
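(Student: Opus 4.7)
The plan is to verify, one by one, the conditions in the definition of a projectable spectral triple with constant length fibers, keeping the same grading $\Gamma$, and to identify the horizontal part by direct computation. The three ingredients to chase are: (a) that $(\A,\H,\mathcal{D}_\omega)$ is a spectral triple at all; (b) projectability and constant fiber length with $l=1$ via the original $\Gamma$; and (c) the identity $(\mathcal{D}_\omega)_h = D_\omega$.

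For (c), I would compute $(\mathcal{D}_\omega)_h = \tfrac{1}{2}\Gamma[\Gamma,\mathcal{D}_\omega]$ and observe that $[\Gamma,\Gamma\delta]=0$ (since $[\Gamma,\delta]=0$ and $\Gamma^2=1$), reducing the computation to $(\mathcal{D}_\omega)_h = \tfrac{1}{2}\Gamma[\Gamma,D_\omega]$. The whole claim then hinges on the anticommutation $\{\Gamma,D_\omega\}=0$. To establish this, I would apply both sides to a generic vector $ha$ with $h\in\H_0^{(\pm)}$ and $a\in\A^{(k)}$, using the definition $D_\omega(ha) = (D_0^{(\pm)}h)a + h\nabla_\omega(a)$. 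The first summand is handled by the relation $\Gamma D_0^{(+)} = -D_0^{(-)}\Gamma$ noted in remark \ref{rmkorientchange} together with $\Gamma(ha)=(\Gamma h)a$, which follows from $\{\Gamma,J\}=0$ and $[\Gamma,\pi(a)]=0$. For the second summand, since $\nabla_\omega(a)\in\Omega^1_D(\B)\A^{(k)}$, the key observation is that for $b\in\B$ one has $[D,b]=[D_h,b]$ (because $[D_v,b]=\Gamma\delta(b)=0$ and $[Z,b]=0$), so $\Gamma[D,b]=-[D,b]\Gamma$; translating through the right action $h[D,b]=-J[D,b]^*J^{-1}h$ yields $\Gamma(h\omega_0)=-(\Gamma h)\omega_0$ for any $\omega_0\in\Omega^1_D(\B)$. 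Assembling, the contributions of $(D_0^{(\pm)}h)a$ and $h\nabla_\omega(a)$ to $\Gamma D_\omega(ha)+D_\omega\Gamma(ha)$ cancel pairwise.

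With $\{\Gamma,D_\omega\}=0$ in hand, (c) is immediate and the vertical part of $\mathcal{D}_\omega$ equals $\Gamma\delta$ with $l=1$, so that $Z'' := \mathcal{D}_\omega - (\mathcal{D}_\omega)_h - (\mathcal{D}_\omega)_v = 0$ trivially satisfies the constant-fiber-length condition. For (a), the bounded commutator property follows from the preceding corollary for $D_\omega$ together with $[\Gamma\delta,a]=\Gamma\,\delta(a)$, which is bounded because $\delta(a)\in\A$ acts by bounded operators. For selfadjointness and compact resolvent I would exploit the stronger fact $\{\Gamma\delta,D_\omega\}=0$: since $D_\omega$ preserves each $\H_k$ it commutes with $\delta$, and combining with $\{\Gamma,D_\omega\}=0$ yields $\Gamma\delta D_\omega = \Gamma D_\omega\delta = -D_\omega\Gamma\delta$. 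Then $\mathcal{D}_\omega^2 = (\Gamma\delta)^2 + D_\omega^2 = \delta^2 + D_\omega^2$, which is essentially selfadjoint on a common smooth core (hence $\mathcal{D}_\omega$ is too), and its spectrum decomposes as $\{k^2+\mu^2 : k\in\Z,\ \mu\in\mathrm{spec}(D_\omega^{(k,\pm)})\}$; any bound $k^2+\mu^2\le R$ restricts $k$ to finitely many values and each $D_\omega^{(k,\pm)}$ has compact resolvent by the preceding discussion, giving compactness of $(1+\mathcal{D}_\omega^2)^{-1}$. The projectability conditions on $\Gamma$ hold verbatim as they did for $D$, and the real-structure compatibilities $J\mathcal{D}_\omega=\epsilon'\mathcal{D}_\omega J$ and $\gamma\mathcal{D}_\omega=-\mathcal{D}_\omega\gamma$ reduce to the analogous identities for $D_\omega$, using $[J,\Gamma\delta]=0$ (from $\{J,\Gamma\}=0$ and $\{J,\delta\}=0$) and $\{\gamma,\Gamma\delta\}=0$ (from $\{\gamma,\Gamma\}=0$ and $[\gamma,\delta]=0$).

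The main obstacle is the anticommutation $\{\Gamma,D_\omega\}=0$: the bookkeeping is delicate because $D_\omega$ is defined in terms of the splitting by $\nu=i\Gamma\gamma$ and the right action of $\Omega^1_D(\B)$ on $\H_0$ (for which remark \ref{rmkj0} instructs us to use $j_0$ rather than $J$). Once the sign of $\Gamma$ against the right action of $\Omega^1_D(\B)$ is pinned down correctly, everything else is straightforward algebra; in contrast, if one tried to argue by writing $D_\omega=D+j_0\omega^*j_0^{-1}\delta-Z'$ globally and commuting $\Gamma$ through, the cross terms do not manifestly cancel and the computation obscures the clean structural reason behind the identity.
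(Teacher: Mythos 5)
The paper's ``proof'' of this proposition is just a pointer to Proposition 5.8 of \cite{DS10}, so there is no in-text argument to compare against; your blind attempt is, in effect, a reconstruction of those details in the present, slightly modified framework. Evaluated on its own merits, your argument is essentially sound and follows the route that the paper's construction makes natural.

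The crux, as you correctly identify, is $\{\Gamma,D_\omega\}=0$. Your verification on vectors $ha$ with $h\in\H_0^{(\pm)}$, $a\in\A^{(k)}$ is correct: $\Gamma(ha)=(\Gamma h)a$ because $\{\Gamma,J\}=0$ and $[\Gamma,\A]=0$, the term $(D_0^{(\pm)}h)a$ gets its sign flipped from $\Gamma D_0^{(+)}=-D_0^{(-)}\Gamma$ (remark \ref{rmkorientchange}), and $\Gamma(h\omega_0)=-(\Gamma h)\omega_0$ for $\omega_0\in\Omega^1_D(\B)$ holds because $\Omega^1_D(\B)$ is built from $[D_h,b]$, which anticommutes with $\Gamma$, transported by $j_0$, which in $KR$-dimensions where $j_0=J$ anticommutes with $\Gamma$ and where $j_0=\gamma J$ commutes with $\Gamma$ --- in both cases yielding the required sign, as a direct check confirms. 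Your observation that working from the definition of $D_\omega$ on $\H_0\A^{(k)}$ is cleaner than manipulating the closed-form expression $D_\omega=D+j_0\omega^*j_0^{-1}\delta-Z'$ is apt; the latter tangles the vertical piece of $\omega$ with $Z'$ in a way that obscures the cancellation. The identification $l=1$ and $Z''=0$ is then immediate, and your argument for compact resolvent via $\mathcal{D}_\omega^2=\delta^2+D_\omega^2$ (using $[\delta,D_\omega]=0$ and $\{\Gamma,D_\omega\}=0$) is the right way to restore compactness that the unbounded $\delta$ provides.

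One point to watch: you reduce the real-structure identities $J\mathcal{D}_\omega=\epsilon'\mathcal{D}_\omega J$ and $\gamma\mathcal{D}_\omega=-\mathcal{D}_\omega\gamma$ to the analogous identities for $D_\omega$, but then leave them unproven. For a general strong connection $\omega$ these need not hold --- indeed the paper explicitly remarks after Proposition 5.2 that the twisted triples are ``(non necessarily real)''. So the proposition's claim of ``projectable spectral triple'' should be read loosely (the operator $\Gamma$ and the decomposition $\mathcal{D}_\omega=D_h+D_v+Z''$ exist, and equivariance $[\delta,\mathcal{D}_\omega]=0$ holds), rather than as an assertion of a full real even spectral triple. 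Your write-up would be cleaner if it flagged this explicitly instead of listing those identities as if they were consequences one routinely reduces to.
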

\begin{proof}
See proof of proposition 5.8 in \cite{DS10}.
\end{proof}

Moreover, as in \cite{DS10} we introduce the following notion of \emph{compatibility}.
\begin{defin}\label{defcomp}
We say that a strong connection $\omega$ is \emph{compatible} with a Dirac operator $D$ if $D_\omega$ and $D_h$ coincide on a dense subset of $\H$.
\end{defin}
Thus given a projectable spectral triple with constant length
fibers and a compatible strong connection, 
the operators $D^{(k)}$ are just $D_0$ twisted by $\nabla_\omega$ on $\A^{(k)}$.

\section{Noncommutative tori}

We will show how the canonical flat spectral triples over $n$-dimensional noncommutative tori are
 projectable and we will work out explicit formulae for the twisted Dirac operators
for the noncommutative 2-torus as a bundle over the circle $S^1$ and the noncommutative 4-torus as a bundle over a noncommutative 3-torus. (The case of 3-torus over the 2-torus is discussed in \cite{DS10}). 

\subsection{Quantum principal $U(1)$-bundle $C^\infty(S^1)\hookrightarrow \T^2_\theta$}

Let $\A = \T^2_\theta$ be the unital smooth algebra of the noncommutative 2-torus, 
with two unitary generators $U,V$ and the commutation relation $UV = e^{2\pi i\theta}VU$ 
($\theta$ irrational).
Let $\delta_1, \delta_2$ be two derivations of $\A$ determined by
$$\delta_2(U) = 0,\quad \delta_2(V) = V; \quad\quad \delta_1(U) = U,\quad \delta_1(V) = 0.$$
Consider the $U(1)$ action on $\A$ given, at infinitesimal level, by $\delta_2$.
Then the invariant subalgebra $\B$ is the (commutative) algebra generated by $U$ and we can identify $\B$ with $C^\infty(S^1)$. 
Now let $\H = \H_\tau\otimes \C^2$, where $\H_\tau $ is the GNS Hilbert space associated to the unique tracial state on $\A$.
The derivations $\delta_j$ can be implemented (via the commutator) by selfadjoint operators on $\H$ 
with integer spectrum, which for simplicity we denote by the same symbol $\delta_j$.
Let $\H_k$, $k\in \Z$, be the $k$-eigenspace of $\delta_2$ in $\H$. 

Consider the standard ``flat'' Dirac operator on $\H$,
$$D = \sum_{i=1}^2 \sigma^i\delta_i,$$
where the $\sigma^i$ are the Pauli matrices. There is also the orientation $\Z_2$-grading
$$\gamma = \id\otimes \sigma_3$$
and the real structure
$$J = J_0\otimes (i\sigma^2\circ c.c.),$$
where $J_0 : \H_\tau\rightarrow \H_\tau$ is the Tomita-Takesaki antiunitary involution 
and $c.c.$ denotes the complex conjugation. 
\begin{prop}
There exists a unique (up to a sign) operator 
$$\Gamma = \pm\,\id\otimes \sigma^2 :\H\rightarrow \H,$$ 
such that $(\A,\H,D,J,\gamma,\delta_2,\Gamma)$ is a projectable real spectral triple.
\end{prop}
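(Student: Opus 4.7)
The plan is to verify existence by direct computation and to establish uniqueness by a structural analysis of the matrix components of $\Gamma$, using Tomita--Takesaki theory and the factoriality of $\pi(\A)''$ for irrational $\theta$.

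\medskip\noindent\textbf{Existence.} With $\Gamma = \id\otimes\sigma^2$, the six defining conditions of an even projectable spectral triple reduce to elementary Pauli-matrix identities: $(\sigma^2)^2 = 1$ and $(\sigma^2)^* = \sigma^2$; the commutations $[\Gamma,\pi(a)] = [\Gamma,\delta_2] = 0$ hold because $\Gamma$ acts only on the spinor factor; $\{\sigma^2,\sigma^3\}=0$ gives $\Gamma\gamma = -\gamma\Gamma$; and $\overline{\sigma^2}=-\sigma^2$ together with $(i\sigma^2)^2 = -1$ yields $\Gamma J = -J\Gamma$ after a short computation on $\C^2$.

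\medskip\noindent\textbf{Uniqueness.} Decompose a general candidate as a block matrix $\Gamma = \sum_{i,j=1}^{2}\Gamma_{ij}\otimes e_{ij}$ with $\Gamma_{ij}\in B(\H_\tau)$. The anticommutation with $\gamma = \id\otimes\sigma^3$ kills the diagonal, leaving $\Gamma = \Gamma_+\otimes e_{12}+\Gamma_-\otimes e_{21}$; selfadjointness forces $\Gamma_- = \Gamma_+^*$ and $\Gamma^2 = 1$ makes $\Gamma_+$ a unitary on $\H_\tau$. The conditions $[\Gamma,\pi(a)] = 0 = [\Gamma,\delta_2]$ descend block-wise to $\Gamma_+$, placing it in the centraliser of $\delta_2$ inside $\pi(\A)'$. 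By Tomita--Takesaki one has $\pi(\A)' = J_0\pi(\A)J_0$, and a short computation using $J_0\delta_2 = -\delta_2 J_0$ shows this centraliser coincides with the abelian subalgebra $J_0\pi(\A_0)''J_0$, where $\A_0 = \C[U,U^{-1}]$ is the kernel of $\delta_2$ acting as a derivation on $\A$. Thus $\Gamma_+\in J_0\pi(\A_0)''J_0$.

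\medskip

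The main obstacle is the final step, in which $\Gamma_+$ must be collapsed to a scalar. The remaining condition $\Gamma J = -J\Gamma$ translates on the block level into $\Gamma_+^* = J_0\Gamma_+ J_0^{-1}$. The right hand side now sits in $J_0(J_0\pi(\A_0)''J_0)J_0^{-1} = \pi(\A_0)''\subset\pi(\A)''$, whereas the left hand side remains in $\pi(\A)'$. Consequently $\Gamma_+^*\in\pi(\A)''\cap\pi(\A)' = Z(\pi(\A)'')$. For irrational $\theta$ the weak closure $\pi(\A)''$ is the hyperfinite type $\mathrm{II}_1$ factor, whose centre is trivial, so $\Gamma_+\in\C$ and unitarity yields $|\Gamma_+| = 1$. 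Under the standard normalisation of the Pauli basis of $\C^2$ (which is the one producing the expected horizontal Dirac operator $D_h = \sigma^1\delta_1$) this residual $U(1)$ phase collapses to an overall sign, giving $\Gamma = \pm\,\id\otimes\sigma^2$. The essential input is the factoriality of $\pi(\A)''$, i.e.\ the irrationality of $\theta$.
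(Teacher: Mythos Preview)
Your Tomita--Takesaki argument reducing $\Gamma_+$ to a scalar is correct and, in a sense, more careful than the paper's own reasoning: the paper simply asserts that $\H_\tau$ is an irreducible $\A$-module, so that $\Gamma$ is automatically of the form $\id\otimes M$ with $M\in M_2(\C)$; but in the tracial GNS representation the commutant $\pi(\A)'=J_0\pi(\A)''J_0$ is the opposite factor, not $\C$. Your route via the centraliser of $\delta_2$ in $\pi(\A)'$ and the triviality of the centre of the hyperfinite $\mathrm{II}_1$ factor is the honest way to force $\Gamma_+\in\C$.

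The gap is in your final step. Once $\Gamma_+=e^{i\phi}$ you obtain the full $U(1)$-family
$\Gamma_\phi=\cos\phi\,\sigma^1+\sin\phi\,\sigma^2$,
and \emph{every} member of this family satisfies all the projectability conditions you have checked, including $\Gamma J=-J\Gamma$ (each $\sigma^j$ anticommutes with $i\sigma^2\circ c.c.$). The phase is not a normalisation freedom: $D=\sigma^1\delta_1+\sigma^2\delta_2$ and $\gamma=\sigma^3$ are already fixed, so different $\phi$ give genuinely distinct gradings, with
\[
D_h=\tfrac12(D-\Gamma_\phi D\Gamma_\phi)
=\sin^2\!\phi\,\sigma^1\delta_1-\sin\phi\cos\phi\,(\sigma^2\delta_1+\sigma^1\delta_2)+\cos^2\!\phi\,\sigma^2\delta_2 .
\]
What singles out $\phi=\pm\pi/2$ is the extra requirement $[D_h,b]=[D,b]$ for all $b\in\B$ (equation~\eqref{projcalc}, a consequence of the constant-length-fibres condition~\eqref{za}); imposing it on $b=U$ forces $\sin^2\!\phi=1$. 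The paper invokes precisely this condition at the corresponding point of its proof, and it is essential: with the bare projectability axioms alone, uniqueness up to sign is false. Your parenthetical remark about ``the expected horizontal Dirac operator $D_h=\sigma^1\delta_1$'' is exactly this missing hypothesis, not a choice of basis.
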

\begin{proof}
Since $\Gamma$ has to commute with $\pi(\A)$, and $\H_\tau$ is an irreducible
 representation of $\A$,
 we have that the most general form of an admissible $\Gamma$ is:
$\Gamma = \alpha_0\cdot\id + \sum_{i=1}^3 \alpha_i\sigma^i$
with $\alpha_i\in\C$. And using $\Gamma\gamma = -\gamma\Gamma$ 
we see immediately that $\alpha_0 = 0$ and $\alpha_3 = 0$. 
Next, from $\Gamma^2 = -1$ we obtain: 
$$\alpha_1^2 + \alpha_2^2 = 1.$$
The last condition to impose is \ref{za}. In fact, already \ref{projcalc} suffices to get, 
for any $v\in\H$,
$$
\sigma^1 U \delta_1(v) = (\alpha_1\alpha_2\sigma^2 -\alpha_2^2\sigma^1)U\delta_1(v).$$
This implies that the only solution is $\alpha_2 = \pm 1$. 
Moreover, $\Gamma = \pm \sigma^2$ 
is consistent with the commutation relation $J\Gamma = -\Gamma J$.
\end{proof}
It follows that $D_h = \sigma^1\delta_1$, and that we can identify both $\H_0^{(\pm)}$ 
with $L^2(S^1,d\varphi)$; then the restriction of $D_h$ to $\H_0^{(\pm)}$ is given by:
$$D_0^{\pm} = \pm i {d\over d\varphi}.$$
Also, the real structure $j_0^{(\pm)}$ on $\H_0^{(\pm)}$ is simply the complex conjugation.

\subsubsection*{Twisted Dirac operators}
If we set $D_v = \sigma^2\delta_2$ we see that the spectral triple discussed above 
has the constant length fibers property and in addition the operator $Z$ turns out to be zero. 
In order to build the twisted Dirac operators we first need a strong connection 
over $\A$. We have:
\begin{lemma}
 A $U(1)$ selfadjoint strong connection over $\A$ is a one-form
$$\omega = \sigma^2 + \sigma^1\omega_1,$$
where $\omega_1$ is a selfadjoint element of $\B$.
\end{lemma}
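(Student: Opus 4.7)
My plan is to prove the two inclusions by translating each of the three defining properties of a strong connection (plus selfadjointness) into concrete constraints on the Clifford components of $\omega$.

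First, I would observe that since $[D,a] = \sigma^1 \delta_1(a) + \sigma^2 \delta_2(a)$ for every $a\in\A$, any element of $\Omega^1_D(\A)$ has a unique decomposition $\omega = \sigma^1 \omega_1 + \sigma^2 \omega_2$ with $\omega_1,\omega_2\in\A$; uniqueness follows because $\sigma^1, \sigma^2$ act on the $\C^2$ factor and are linearly independent over $\A$. Explicitly, if $\omega = \sum_i p_i [D, q_i]$, then $\omega_j = \sum_i p_i \delta_j(q_i)$ for $j=1,2$. The compatibility condition \eqref{eq43} ensures these are well defined.

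Next I would impose, in order, the three conditions of Definition \ref{defcon}. $U(1)$-invariance $[\delta_2,\omega]=0$ gives $\delta_2(\omega_1)=\delta_2(\omega_2)=0$, since $\delta_2$ commutes with the constant Pauli matrices; this forces $\omega_1,\omega_2\in\B$. The vertical field condition reads $\omega_2 = \sum_i p_i \delta_2(q_i) = 1$ directly, by comparing the $\sigma^2$-components. For the strongness condition I would compute
$$
[D,a] - \delta_2(a)\,\omega = \sigma^1\bigl(\delta_1(a) - \omega_1\,\delta_2(a)\bigr) + \sigma^2\bigl(\delta_2(a) - \delta_2(a)\bigr) = \sigma^1\bigl(\delta_1(a) - \omega_1\,\delta_2(a)\bigr),
$$
where I used that $\omega_1\in\B$ commutes with $\sigma^2$ and with scalars. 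Since $\delta_1\restrict_\B$ surjects onto the augmentation ideal of $\B = C^\infty(S^1)$ and $\Omega^1_D(\B) = \sigma^1\B$, the right hand side lies in $\sigma^1\A = \Omega^1_D(\B)\A$, so strongness is automatic.

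Finally I would verify selfadjointness: since $\omega_1\in\B$ commutes with $\sigma^1$, one has $\omega^* = \sigma^2 + \sigma^1 \omega_1^*$, and $\omega=\omega^*$ exactly when $\omega_1=\omega_1^*$. Conversely, any $\omega = \sigma^2 + \sigma^1\omega_1$ with $\omega_1\in\B$ selfadjoint satisfies all four conditions by the same calculations read backwards (taking the representative $\omega = V^*[D,V] + \omega_1 U^{-1}[D,U]$ for the vertical field condition and noting $V^* \delta_2(V)=1$, $U^{-1}\delta_1(U)=1$). The main technical point to be careful about is the well-definedness of $\omega_1,\omega_2$ when $\omega$ is presented as a sum of simple tensors $p_i[D,q_i]$; uniqueness of the Clifford decomposition removes this worry, but one needs to invoke the compatibility hypothesis \eqref{eq43} to make sense of the vertical field condition as a constraint on $\omega$ itself rather than on a particular representative.
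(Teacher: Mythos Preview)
Your proof is correct and follows essentially the same approach as the paper: decompose $\omega$ into its Pauli components, then apply $U(1)$-invariance to force $\omega_j\in\B$ and the vertical field condition to force $\omega_2=1$. You are in fact more thorough than the paper, which does not explicitly verify strongness, selfadjointness, or the converse direction; your treatment of these is fine. One small slip: in the strongness computation the term arising from $\delta_2(a)\,\sigma^1\omega_1$ is $\sigma^1\,\delta_2(a)\,\omega_1$, not $\sigma^1\,\omega_1\,\delta_2(a)$, since $\omega_1\in\B$ and $\delta_2(a)\in\A$ need not commute---but this does not affect the conclusion, as either ordering lies in $\sigma^1\A=\Omega^1_D(\B)\A$.
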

\begin{proof}
Let $\omega\in\Omega^1_D(A)$; then $\omega$ can be written as:
$$\omega = \sum_i a_i[D,c_i]$$
with $a_i,c_i\in\A$. This implies that we can generally write $\omega$ as
$$\omega = \sum_i \sigma^i\omega_i$$
with $\omega_i\in\A$. In order to be a strong connection, $\omega$ has to fulfill the 
properties of definition \ref{defcon}. In particular we need $[\delta,\omega] = 0$, 
and this implies $\omega_i\in\B$.
 Also, $\omega_3$ should be equal to zero, since we cannot obtain an operator such as 
$\sigma^3\omega^3$ from a commutator $[D,a]$. Thus we are left with a connection 
of the form:
$$\omega = \sigma^1\omega_1 + \sigma^2\omega_2.$$
Now notice that, for $j = 1,2$, we can write the Pauli matrices $\sigma^j$ as: 
$\sigma^j = U_j^{-1}[D,U_j]$ (where $U_1 = U$, $U_2 = V$). Then $\omega$ becomes:
$$\omega = \sum_{j=1}^2 \omega_j U_j^{-1}[D,U_j].$$
But now using the second condition of definition \ref{defcon} we obtain: $\omega_2 = 1$. 
Thus the most general $U(1)$ strong connection on $\A$ is:
$$\omega = \sigma^2 + \sigma^1\omega_1, \quad\quad \omega_1\in\B.$$
\end{proof}

Now we can compute the Dirac operator $D_\omega$ twisted by the strong connection 
$\omega$ following the construction described in the previous section.
\begin{prop}\label{propDomegaT2}
For any selfadjoint $U(1)$ strong connection $\omega$, the associated Dirac operator 
$D_\omega$ has the form
$$D_\omega = D_h - \sigma^1j_0\omega_1j_0^{-1}\delta_2.$$
\end{prop}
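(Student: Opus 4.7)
The plan is to apply formula \eqref{eqDomega}, derived in the proof of the preceding proposition, specialised to the flat $2$-torus setting. Two preliminary observations: since $\omega$ is selfadjoint, $\omega^* = \omega$; and, as noted in the paragraph preceding the proposition, the flat Dirac decomposes as $D = \sigma^1\delta_1 + \sigma^2\delta_2 = D_h + D_v$, so the constant-length fibers condition is satisfied with $Z = 0$ and assumption (iii) holds trivially with $Z' = 0$. Hence \eqref{eqDomega} reduces to
\[
D_\omega = D + j_0\omega j_0^{-1}\delta_2.
\]
This additional term vanishes on $\H_0$ (where $\delta_2 = 0$); on $\H_k$ with $k\neq 0$, the operator $j_0\omega j_0^{-1}$ is interpreted via the $J$-implemented right-action extension, consistent with the derivation of \eqref{eqDomega}.

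The remaining task is a conjugation calculation. Using $J = J_0\otimes(i\sigma^2\circ c.c.)$, a direct computation gives
\[
J(\id\otimes M)J^{-1} = \id\otimes \sigma^2\bar M\sigma^2,
\]
whence $J\sigma^i J^{-1} = -\sigma^i$ for $i=1,2$. Since $\omega_1\in\B$ and $\gamma$ commutes with $\pi(\A)$, the conjugations by $J$ and by $j_0 = \gamma J$ agree on $\omega_1$: $j_0\omega_1 j_0^{-1} = J\omega_1 J^{-1}$. Combining these facts,
\[
J\omega J^{-1} = -\sigma^2 - \sigma^1 (J\omega_1 J^{-1}),
\]
and substituting into the formula yields
\[
D_\omega = \sigma^1\delta_1 + \sigma^2\delta_2 - \sigma^2\delta_2 - \sigma^1 j_0\omega_1 j_0^{-1}\delta_2 = D_h - \sigma^1 j_0\omega_1 j_0^{-1}\delta_2,
\]
as claimed. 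The cancellation of the $\sigma^2\delta_2$ contributions is the algebraic trace of the vertical-field condition on a strong connection, namely that the coefficient of $\sigma^2$ in $\omega$ is forced to equal $1$: it precisely kills the vertical part of $D$.

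The main point of care is the interpretation of $j_0\omega j_0^{-1}$ as an operator on all of $\H$. Strictly, $j_0$ is defined only on $\H_0$, so the convention is to extend by the $J$-implemented right action on the $k\neq 0$ weight spaces, as forced by the associativity of the right $\A$-module structure on $\H$. The two extensions coincide on $\omega_1\in\B$, and the $\sigma^i$ conjugations, which differ between $j_0$ and $J$, are only invoked on weight spaces where $\delta_2\neq 0$, where $J$ is the correct convention; this is what makes the final identity well-posed.
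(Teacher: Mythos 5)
Your proof is correct, and it reaches the same formula, but by a somewhat different route than the paper's. The paper's proof does not invoke \eqref{eqDomega} at all: it specializes the defining formula \eqref{eqDomegak} directly to $h_0\in\H_0^{(\pm)}$ and $a\in\A^{(k)}$, computes $\nabla_\omega(a)=\sigma^1(\delta_1(a)-ka\omega_1)$ explicitly (the vertical part of $[D,a]$ cancels against $\delta(a)\sigma^2$ there), tracks the $\pm$ sign through $\sigma^1=\mp 1$ on $\H_0^{(\pm)}$ to get $D_{\omega,k}^{(\pm)}(h)=\mp\delta_1(h)\pm j_0\omega_1 j_0^{-1}\delta_2(h)$, and then reassembles over $k$ and $\pm$. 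You instead start from the closed-form expression $D_\omega=D+j_0\omega^*j_0^{-1}\delta-Z'$ established in the general proposition, set $Z'=0$ and $\omega^*=\omega$, and push the $\sigma^2$-cancellation to the very end as a conjugation computation. Both are legitimate; yours is shorter once \eqref{eqDomega} is granted, while the paper's recomputation makes the $\pm$ bookkeeping visible and does not rely on how \eqref{eqDomega}'s $j_0$ should be read off $\H_0$.

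One point deserves to be made sharper than you make it. The symbol $j_0$ in \eqref{eqDomega} is literally defined only on $\H_0$, and $j_0\sigma^i j_0^{-1}=\gamma J\sigma^i J^{-1}\gamma^{-1}=+\sigma^i$ there, whereas $J\sigma^i J^{-1}=-\sigma^i$. These opposite signs are precisely what would flip your result to $D_h + 2\sigma^2\delta_2 + \sigma^1 J\omega_1 J^{-1}\delta_2$ if one took $j_0$ literally. You resolve this by saying the conjugation on $\H_k$, $k\neq 0$, ``must'' be the $J$-implemented one, appealing to ``associativity of the right module structure'' — a remark that is pointing in the right direction but is not itself a proof. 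The clean justification is that in the derivation of \eqref{eqDomega} the term in question comes from $-k(hp)\omega$, i.e.\ from the right action of the one-form on a vector $hp\in\H_k$; the right action of a one-form on $\H$ (not just $\H_0$) is the one in \eqref{homega}, implemented by $J$, and the appearance of $j_0$ in \eqref{eqDomega} is only an intrinsic rewriting valid where it matters ($\omega_1\in\B$, on which $j_0(\cdot)j_0^{-1}=J(\cdot)J^{-1}$). With that said explicitly, your argument is complete; the conjugation formula $J(\mathrm{id}\otimes M)J^{-1}=\mathrm{id}\otimes\sigma^2\bar M\sigma^2$, the identity $j_0\omega_1 j_0^{-1}=J\omega_1 J^{-1}$ for $\omega_1\in\B$, and the $\sigma^2\delta_2$ cancellation are all correct.
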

\begin{proof}
By the results in sect. \ref{ssectprojU1even} the projectability of the spectral triple $(\A,\H,D,J,\delta)$
implies that there are two spectral triples over the invariant subalgebra $\B$, 
which in this case is simply the algebra of smooth functions on $S^1$. 
They are given by $(\B,\H_0^{(\pm)},D_0^{(\pm)},j_0)$. 
In order to fix the conventions, we say that on $\H_0^{(+)}$ the Dirac operator $D_0$ is given 
by $-\delta_1$, while on $\H_0^{(-)}$ it is given by $\delta_1$ (note that $\nu = -\sigma^1$, 
and thus $\sigma^1$ is diagonal with respect to the decomposition $\H_0 = \H_0^{(+)}\oplus\H_0^{(-)}$). 
Now we have to compute the twisted operators $D_{\omega,k}^{(\pm)}$ on $\H_k^{(\pm)}$. 
Take $h_0\in\H_0^{(+)}$. Then, for any $a\in\A^{(k)}$, we have:
$$D_{\omega,k}^{(+)}(h_0a) = (D_0^{(+)}h_0)a - h_0\nabla_\omega(a)
= -\delta_1(h_0a)+ kh_0a\omega_1.$$
Thus, if we take $h\in\H_k^{(+)}$ we see that the action of the twisted Dirac operator is given by
$$D_{\omega,k}^{(+)}(h) = -\delta_1(h) + j_0\omega_1j_0^{-1}\delta_2(h).$$
In the same way, one obtains that, for $h\in\H_k^{(-)}$,
$$D_{\omega,k}^{(-)}(h) = \delta_1(h) - j_0\omega_1j_0^{-1}\delta_2(h).$$
If now we put them together, and we consider the collection of all of them for any $k\in\Z$, we get that the full twisted Dirac operator $D_\omega$ is given, as an operator on $\H$, by
$$D_\omega = \sigma^1\delta_1 - \sigma^1j_0\omega_1j_0^{-1}\delta_2,$$
which is equal to $D_h - \sigma^1j_0\omega_1j_0^{-1}\delta_2$.
\end{proof}
\begin{cor}
 The only connection compatible with $D$, i.e. with the fully $U(1)^2$-equivariant Dirac operator, on the noncommutative 2-torus is $\omega = \sigma^2$.
\end{cor}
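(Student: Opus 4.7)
The plan is to apply Proposition~\ref{propDomegaT2} directly. By that proposition, for any selfadjoint $U(1)$ strong connection $\omega = \sigma^2 + \sigma^1\omega_1$ (as classified by the preceding lemma, with $\omega_1$ a selfadjoint element of $\B$), the associated twisted Dirac operator has the explicit form
$$
D_\omega \;=\; D_h \;-\; \sigma^1\, j_0\omega_1 j_0^{-1}\, \delta_2.
$$
By Definition~\ref{defcomp}, compatibility of $\omega$ with $D$ is precisely the requirement that $D_\omega$ and $D_h$ coincide on a dense subspace of $\H$, so the whole matter reduces to showing that the vanishing of $\sigma^1\, j_0 \omega_1 j_0^{-1}\, \delta_2$ on a dense subspace forces $\omega_1 = 0$.

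To carry this out, I would first cancel the unitary factor $\sigma^1$ and decompose $\H = \bigoplus_{k\in\Z}\H_k$ into the eigenspaces of $\delta_2$. For each $k \neq 0$, $\delta_2$ acts on $\H_k$ as multiplication by the nonzero integer $k$, so the condition becomes that right multiplication by $\omega_1$ (implemented through the real structure, understood as a bounded operator on all of $\H$ via Remark~\ref{rmkj0}) vanishes on a dense subset of $\H_k$. Testing this on the dense span of the monomials $U^m V^k$ in $\H_k$ and using the commutation relation $VU = e^{2\pi i\theta}UV$, the right action of $\omega_1 = \sum_m c_m U^m$ on $U^{m'}V^k$ produces $\sum_m c_m\, e^{-2\pi i k\theta m}\, U^{m'+m}V^k$, which is nothing but a convolution in the $U$-index with nonvanishing phases and is therefore injective. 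Hence $\omega_1 = 0$, which gives $\omega = \sigma^2$.

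No real obstacle is anticipated here: the argument ultimately boils down to the faithfulness of the right $\A$-action on the tracial GNS Hilbert space, combined with the fact that $\delta_2$ has integer spectrum and hence nontrivial kernel only on $\H_0$, so the density requirement genuinely constrains the operator on the $k\neq 0$ sectors. The only mild bookkeeping point is the interpretation of $j_0 \omega_1 j_0^{-1}$ as an operator on the whole of $\H$, which is exactly what Remark~\ref{rmkj0} provides, together with the observation that, since $\omega_1 \in \B$ is selfadjoint and $\gamma$ commutes with $\A$, one has $j_0\omega_1 j_0^{-1} = J\omega_1 J^{-1}$, the standard right multiplication.
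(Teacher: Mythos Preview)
Your proposal is correct and follows the same line as the paper's proof, which simply cites the preceding results and Definition~\ref{defcomp}; you have merely spelled out explicitly why the vanishing of $\sigma^1 j_0\omega_1 j_0^{-1}\delta_2$ on a dense set forces $\omega_1 = 0$, using the faithfulness of the right action on the GNS space and the nonzero eigenvalues of $\delta_2$ on $\H_k$ for $k\neq 0$.
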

\begin{proof}
 It follows from previous lemma and definition \ref{defcomp}.
\end{proof}
Now we can compute, given any strong connection $\omega$, the general form 
of a Dirac operator $D^{(\omega)}$ compatible with such a connection.
\begin{prop}
Let $\omega = \sigma^2 + \sigma^1\omega_1$ be a selfadjoint connection. 
Then the following Dirac operator,
$$D^{(\omega)} = D - \sigma^1j_0\omega_1j_0^{-1}\delta_2,$$
is compatible with $\omega$.
\end{prop}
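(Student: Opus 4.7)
The plan is to show that for $D^{(\omega)}$ both the horizontal part (with respect to the projectability grading $\Gamma=\sigma^2$) and the twisted operator obtained from it via the connection $\omega$ agree, both being equal to the expression for $D_\omega$ in Proposition \ref{propDomegaT2}.

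First I would verify that $D^{(\omega)}$ still defines a $U(1)$-equivariant projectable real spectral triple with constant length fibers, using the same $\Gamma=\sigma^2$. The extra term $-\sigma^1 j_0\omega_1 j_0^{-1}\delta_2$ commutes with $\delta_2$ (since $\omega_1\in\B=\A^{(0)}$ is $U(1)$-invariant and $j_0\omega_1 j_0^{-1}$ acts as right multiplication by $\omega_1$), has the proper commutation with $J$ and $\gamma$ (thanks to $\omega_1^*=\omega_1$ and the $\sigma^1$ factor), and yields a bounded commutator with any $a\in\A$ because $[\delta_2,a]$ is bounded and $\omega_1\in\B$. The ``extra bounded'' piece is absorbed into $Z$ in the sense of definition \ref{defU1constlength} (or vanishes entirely on $\H_0$).

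Next I would compute the horizontal part $(D^{(\omega)})_h=\tfrac12\Gamma[\Gamma,D^{(\omega)}]$. Using $[\sigma^2,\sigma^1]=-2i\sigma^3$, $[\sigma^2,\sigma^2]=0$ and $\sigma^2\sigma^3 = i\sigma^1$, one finds
\begin{equation*}
(D^{(\omega)})_h \;=\; \sigma^1\delta_1 - \sigma^1 j_0\omega_1 j_0^{-1}\delta_2 \;=\; D_h - \sigma^1 j_0\omega_1 j_0^{-1}\delta_2,
\end{equation*}
which is exactly the twisted operator $D_\omega$ identified in Proposition \ref{propDomegaT2}.

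Finally I would observe that the twisting recipe \eqref{eqDomegak} depends only on the restriction $D_0^{(\pm)}$ of the horizontal Dirac operator to $\H_0^{(\pm)}$ and on $\nabla_\omega$. Since the extra term in $D^{(\omega)}$ carries a factor $\delta_2$, which vanishes on $\H_0$, the restrictions $(D^{(\omega)})_0^{(\pm)}$ coincide with $D_0^{(\pm)}=\mp\,\partial_\varphi$ from the original Dirac operator. Hence the twisted operator built from $D^{(\omega)}$ and $\omega$ equals the twisted operator built from $D$ and $\omega$, namely $D_\omega$. Combining with the horizontal-part computation gives $(D^{(\omega)})_\omega = D_\omega = (D^{(\omega)})_h$ on the dense subspace of elementary tensors $h\otimes_\B a$, which is precisely the compatibility of $\omega$ with $D^{(\omega)}$ in the sense of definition \ref{defcomp}. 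The only mild subtlety, and the step I would write most carefully, is the first one: checking that the additional term does not spoil projectability or the constant-length-fibers condition, so that the twisted construction of Section \ref{tDo} is applicable to $D^{(\omega)}$ in the first place.
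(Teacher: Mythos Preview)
Your approach is essentially the same as the paper's, which gives only the one-line justification ``It follows from definition \ref{defcomp} together with the computation of proposition \ref{propDomegaT2}.'' You have unpacked exactly what that sentence means: compute the horizontal part of $D^{(\omega)}$ with respect to $\Gamma=\sigma^2$, recognize it as the expression $D_\omega$ from Proposition \ref{propDomegaT2}, and then check that the twisted construction applied to $D^{(\omega)}$ reproduces the same operator. Your care in first verifying that $D^{(\omega)}$ is still projectable with constant-length fibers (so that the twisting machinery of Section \ref{tDo} applies at all) is a point the paper leaves implicit.

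One step you should tighten: you correctly note that the twisting recipe \eqref{eqDomegak} depends on both $D_0^{(\pm)}$ \emph{and} $\nabla_\omega$, but then only verify that the former is unchanged (because the extra term carries $\delta_2$ and vanishes on $\H_0$). You should also note that $\nabla_\omega(a)=[D^{(\omega)},a]-\delta(a)\omega$ now involves $[D^{(\omega)},a]$ rather than $[D,a]$, so the covariant derivative is \emph{a priori} different. The cleanest way to close this gap is simply to rerun the explicit computation in the proof of Proposition \ref{propDomegaT2} with $D^{(\omega)}$ in place of $D$; the extra contribution from $-\sigma^1 j_0\omega_1 j_0^{-1}\delta_2$ on the $\nabla_\omega$ side then matches the extra contribution on the $(D^{(\omega)})_h$ side, and compatibility follows. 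This is precisely what the paper's appeal to ``the computation of proposition \ref{propDomegaT2}'' is meant to convey.
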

\begin{proof}
It follows from definition \ref{defcomp} together with the computation of proposition 
\ref{propDomegaT2}.
\end{proof}

\subsection{The principal extension $\T^3_{\theta'}\hookrightarrow\T^4_\theta$}
Let  $\A$ be the unital (smooth) algebra of the noncommutative $4$-torus, generated
 by four unitaries $U_1, U_2,U_3,U_4$ with the commutation relations 
$U_iU_j = e^{2\pi i\theta_{ij}}U_jU_i$, where $\theta_{ij}$ is an antisymmetric matrix
 with no rational entries and no rational relation between them. On $\A$ 
there is the canonical action of $U(1)^4$, whose generators are the derivations 
$\delta_j$,
$$\delta_i(U_j) = \delta_{ij}U_j.$$
As $U(1)$ quantum principal bundle structure we take the one given by the choice 
$\delta = \delta_4$, and we assume the relative spin structure to be the trivial one. 
Thus the invariant subalgebra $\B$ is the algebra generated by $U_1,\ldots,U_3$ 
and is isomorphic to the algebra of a noncommutative 3-torus.

We recall briefly the structure of (one of the) flat $\T^4$-equivariant spectral triples 
over $\T^4_\theta$. The commutation relations in $KR$-dimension 4 are the following 
ones:
\begin{equation}\label{KR4}
 J^2 = -1,\quad\; JD = DJ,\quad\; J\gamma = \gamma J.
\end{equation}
In order to work out explicitly the operators, it is useful to recall the structure 
of the Clifford algebra $\Cl(4)$ (so that we can fix the notation).

\subsubsection*{The Clifford algebra $\Cl(4)$}
The Clifford algebra $\Cl(4)$ is generated by four elements, 
$\gamma^1,\ldots,\gamma^4$, with the relations
\begin{equation}\label{Cl4}
 \begin{array}{l}
{\gamma^i}^2 = 1,\\
\gamma^i\gamma^j = -\gamma^j\gamma^i \;\;\;\;\mathrm{for}\; i\neq j, \\
{\gamma^i}^* = \gamma^i.
\end{array}
\end{equation}
We can represent the $\gamma^i$'s as $4\times 4$ matrices, related to the Dirac
matrices. In the so-called Dirac representation we can write the matrices $\gamma^i$ 
as:
\begin{equation}\label{gammam}
 \gamma^4 = \left(\begin{array}{cc}
1 & 0 \\ 0 & -1
\end{array}\right),\quad\quad
 \gamma^j = \left(\begin{array}{cc}
0 & i\sigma^j \\ -i\sigma^j & 0
\end{array}\right).
\end{equation}
Moreover, we can define a matrix $\gamma^5\equiv 
\gamma^1\gamma^2\gamma^3\gamma^4$ which satisfies $\gamma^5\gamma^j = 
-\gamma^j\gamma^5$, ${\gamma^5}^2 = 1$ and ${\gamma^5}^* = \gamma^5$; 
in Dirac representation:
\begin{equation*}
 \gamma^5 = \left(\begin{array}{cc}
0 & 1 \\ 1 & 0
\end{array}\right).
\end{equation*}

We recall, also, that using the Dirac matrices we can write down a basis for 
$M_4(\C)$. In particular, if we define $\sigma^{ij} \equiv [\gamma^i,\gamma^j]$ 
(for $i,j = 1,\ldots,4$), then the basis is given by:
\begin{equation}\label{M4Cbasis}
 \begin{array}{lcl}
\id,\;\;\gamma^5, & & \\
\gamma^i & \quad & i=1,\ldots,4, \\
\gamma^5\gamma^i & \quad &  i=1,\ldots,4, \\
\sigma^{ij} & \quad & i < j.
 \end{array}
\end{equation}

\subsubsection*{A projectable spectral triple}
Now, let $\H_\tau$ be the GNS Hilbert space associated to the canonical trace 
$\tau$ on $\A$ \cite{Bondia}. Define $\H = \H_\tau\otimes \C^4$. 
We consider the usual flat Dirac operator \cite{Bondia}:
$$D = \sum_{j=1}^4 \gamma^j\delta_j.$$
Then we can take the orientation $\Z_2$-grading to be $\gamma = \gamma^5$. 
To define $J$, we recall that it is related to the charge conjugation operator; 
so we take $$J = J_0\otimes(\gamma^4\gamma^2\circ c.c.),$$
where $J_0 : \H_\tau\rightarrow \H_\tau$ is the Tomita-Takesaki involution 
and $c.c.$ denotes the complex conjugation. Then one can see that the spectral triple 
$(\A,\H,D,J,\gamma)$ satisfies the relations \eqref{KR4}, 
and it is also a $U(1)$-equivariant spectral triple, which is projectable:
\begin{prop}
 The unique operators $\Gamma : \H\rightarrow\H$, such that 
$(\A,\H,D,J,\gamma,\delta_4,\Gamma)$ is a projectable real spectral triple 
with projectable differential calculus, are $\Gamma = \pm\id\otimes \gamma^4$.
\end{prop}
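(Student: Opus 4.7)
The plan is to mimic the scheme used in the 2-torus proposition but now inside $M_4(\C)$, systematically using the Clifford basis (\ref{M4Cbasis}). First, since $\A$ is represented irreducibly on $\H_\tau$ (as invoked in the 2-torus case), the commutation $[\Gamma, \pi(a)] = 0$ for all $a \in \A$ forces $\Gamma = \id \otimes M$ for some $M \in M_4(\C)$. The remaining purely algebraic conditions then become $M^2 = 1$ and $M^* = M$, while $[\Gamma, \delta_4] = 0$ is automatic since $\delta_4$ acts only on the GNS factor.

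Next, I would expand $M$ in the basis $\{\id, \gamma^5, \gamma^i, \gamma^5\gamma^i, \sigma^{ij}\}$. A direct check shows that $\id$, $\gamma^5$, and the $\sigma^{ij}$ all commute with $\gamma^5$, whereas the $\gamma^i$ and $\gamma^5\gamma^i$ anticommute with it. Hence the grading condition $\Gamma\gamma = -\gamma\Gamma$ with $\gamma = \id \otimes \gamma^5$ immediately eliminates the coefficients of $\id$, $\gamma^5$ and all $\sigma^{ij}$, leaving $M = \sum_i \alpha_i \gamma^i + \sum_i \beta_i \gamma^5\gamma^i$.

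The projectability of the differential calculus, applied to the generators $U_k$ of $\B$ for $k = 1, 2, 3$, is equivalent, via $D_h = \frac{1}{2}\sum_j \delta_j \otimes (\gamma^j - M\gamma^j M)$ and $\delta_j(U_k) = \delta_{jk}U_k$, to the requirement $M\gamma^k M = -\gamma^k$, i.e.\ $\{M, \gamma^k\} = 0$ once $M^2 = 1$ is invoked. Computing these three anticommutators in the reduced basis and using that the resulting terms (scalars and products of the form $\gamma^5\gamma^i\gamma^k$) are linearly independent in $M_4(\C)$ forces $\alpha_1 = \alpha_2 = \alpha_3 = \beta_1 = \beta_2 = \beta_3 = 0$, reducing $M$ to a two-parameter expression $M = \alpha\gamma^4 + \beta\gamma^5\gamma^4$.

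It then remains to fix $\alpha$ and $\beta$. Hermiticity combined with $(\gamma^5\gamma^4)^* = -\gamma^5\gamma^4$ forces $\alpha \in \R$ and $\beta \in i\R$, and $M^2 = 1$ becomes $\alpha^2 - \beta^2 = 1$. The constant-length-fibers condition $[Z, U_4] = 0$ unpacks into $\frac{1}{2}(\gamma^4 + M\gamma^4 M) = \frac{1}{l}M$; evaluating $M\gamma^4 M = (\alpha^2 + \beta^2)\gamma^4 + 2\alpha\beta\gamma^5\gamma^4$ and matching coefficients of the two independent basis elements $\gamma^4$ and $\gamma^5\gamma^4$ gives the two scalar equations $\alpha\beta = \beta/l$ and $(1 + \alpha^2 + \beta^2)/2 = \alpha/l$, which together with the positivity of $l$ and the trivial spin-structure assumption stated just before the proposition admit only $\beta = 0$, $\alpha = \pm 1$. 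A final verification that $\Gamma J = -J\Gamma$ holds is automatic, since $\bar{\gamma^4} = \gamma^4$ in the Dirac representation and $\{\gamma^4, \gamma^4\gamma^2\} = 0$. The main technical obstacle is precisely this last step: narrowing the two-parameter family $\alpha\gamma^4 + \beta\gamma^5\gamma^4$ down to $\pm\gamma^4$, which requires the careful interplay between $M^2 = 1$, Hermiticity and the positivity of $l$.
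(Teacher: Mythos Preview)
Your reduction of the projectable-calculus condition to the anticommutator relations $\{M,\gamma^k\}=0$ for $k=1,2,3$ is slicker than the paper's approach, which instead expands $D_h=\tfrac12\Gamma[\Gamma,D]$ in full (equation (\ref{Dh4t})), compares termwise with $D$ on $\B$, and then combines the resulting system (\ref{Gammacond1})--(\ref{Gammacond3}) with the inequalities coming from $\alpha_j\in\R$, $\beta_j\in i\R$ to squeeze out $\alpha_4^2=1$. Your route bypasses that bookkeeping entirely.

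However, you undersell your own step. With $M=\sum_i\alpha_i\gamma^i+\sum_i\beta_i\gamma^5\gamma^i$ one computes
\[
\{M,\gamma^k\}=2\alpha_k\,\id+\sum_{i\neq k}2\beta_i\,\gamma^5\gamma^i\gamma^k,
\]
and the elements $\id$ and $\gamma^5\gamma^i\gamma^k$ (for $i\neq k$) are linearly independent in $M_4(\C)$. Setting this to zero for $k=1$ alone already kills $\alpha_1$ and $\beta_2,\beta_3,\beta_4$; running $k=2,3$ kills $\alpha_2,\alpha_3,\beta_1$ as well. So after this step you are left with $M=\alpha_4\gamma^4$, not a two-parameter family, and $M^2=1$ finishes immediately. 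Your subsequent invocation of the constant-length-fibers condition on $U_4$ is therefore unnecessary --- which is fortunate, since that condition is \emph{not} among the hypotheses of the proposition (only projectability and projectable calculus are assumed), and the ``trivial spin structure'' remark you cite plays no role in the argument either.
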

\begin{proof}
 Since $[\Gamma,\pi(a)] = [\Gamma,\delta] = 0$ for all $a\in\A$, $\Gamma$ must be 
of the form $\Gamma = \id\otimes A$ for some matrix $A\in M_4(\C)$. Then using the 
fact that \eqref{M4Cbasis} give a basis of $M_4(\C)$, we can write $\Gamma$ as
$$\Gamma = a + b\gamma^5 + \sum_j c_j\gamma^j + \sum_j d_j\gamma^5\gamma^j 
+ \sum_{i,j}e_{ij}\sigma^{ij}.$$
From $\Gamma\gamma = -\gamma\Gamma$ we deduce $a = b = e_{ij} = 0$. 
Thus we are left with
$$\Gamma = \sum_j(\alpha_j\gamma^j + \beta_j\gamma^5\gamma^j),
\quad\;\alpha_j,\beta_j\in\C,$$
where $\alpha_j\in\R$ and $\beta_j\in i\R$, as follows from the condition 
$\Gamma = \Gamma^*$. This implies that we can write $\Gamma^2$ as:
$$\Gamma^2 = \sum_j \alpha_j^2 + \sum_{i\neq j}
2\alpha_i\beta_j\gamma^5\gamma^i\gamma^j - \sum_j\beta_j^2.$$
Next, using the condition $\Gamma^2 = -1$, we deduce:
\begin{equation}\label{Gammacond1}
\left\{\begin{array}{l}
 \displaystyle \alpha_i\beta_j = \alpha_j\beta_i \quad\quad\forall i\neq j \\ \\
 \displaystyle \sum_j (\alpha_j^2 - \beta_j^2) = 1
\end{array}\right.
\end{equation}

We have now to impose that $Z$ commutes with $\A$; in particular we shall require 
that $[D,b] = [D_h,b]$ for all $b\in\B$. Let us compute, first of all, 
$D_h = {1\over 2}\Gamma[D,\Gamma]$ (we use the Einstein convention 
for the sum over repeated indices):
\begin{equation}\label{Dh4t}
 \begin{split}
  D_h & = {1\over 2}(\alpha_i\gamma^i + \beta_i\gamma^5\gamma^i)[\gamma^j\delta_j, \alpha_k\gamma^k + \beta_k\gamma^5\gamma^k] \\
& = {1\over 2}(\alpha_i\gamma^i + \beta_i\gamma^5\gamma^i)(\alpha_k\sigma^{jk}\delta_j - 2\beta_k\gamma^5\delta_k) \\
& = \alpha_j\alpha_k\gamma^k\delta_j - \alpha_j\alpha_j\gamma^k\delta_k + \eps_{ijkl}\alpha_i\alpha_k\gamma^5\gamma^l\delta_j \\
& -\alpha_i\beta_k\gamma^i\gamma^5\delta_k + \eps_{ijkl}\beta_i\alpha_k\gamma^l\delta_j + \beta_i\beta_k\gamma^i\delta_k.
 \end{split}
\end{equation}
And now, from the condition $[D,b] = [D_h,b]$, using $[\delta_4,\B] = 0$ 
and the linear independence of the sixteen generators \eqref{M4Cbasis}, we get:
\begin{equation}\label{Gammacond2}
\left\{\begin{array}{l}
\displaystyle \eps_{ijkl}\alpha_i\alpha_k\gamma^5\gamma^l\delta_j - \alpha_i\alpha_k\gamma^i\gamma^5\delta_k = 0 \\ \\
\displaystyle \sum_{j\neq k} (\alpha_j\alpha_k\gamma^k\delta_j + \beta_i\beta_k\gamma^j\delta_k) + \eps_{ijkl}\beta_i\alpha_k\gamma^l\delta_j =  0 \\
\displaystyle \sum_{j\neq k}-\alpha_j^2\gamma^k\delta_k + \sum_i\beta_i^2\gamma^i\delta_i = \sum_{j=1}^3\gamma^i\delta_i.
\end{array}\right.
\end{equation}
The last condition implies:
\begin{equation}\label{Gammacond3}
 \left\{\begin{array}{l}
\displaystyle \beta_4^2 = \sum_{j = 1}^3 \alpha_j^2 \\
\displaystyle \mathrm{for}\;i\neq 4,\;\; \sum_{j\neq i} - \alpha_j^2 + \beta_i^2 = 1.
\end{array}\right.
\end{equation}
If now we use \eqref{Gammacond3} to compute $\sum_j\beta_j^2$ we get:
\begin{equation}\label{Gammacond4}
\sum_j\beta_j^2 = \sum_{j=1}^3\alpha_j^2 + \sum_{i=1}^3\left(1 + \sum_{j\neq i}\alpha_j^2 \right).
\end{equation}
Comparing \eqref{Gammacond4} with the second equation of \eqref{Gammacond1} 
we obtain the following relation:
\begin{equation}\label{Gammacond5}
\alpha_4^2 + {1\over 2}\beta_4^2 = 1.
\end{equation}
Now, we know that $\alpha_j\in\R$ and $\beta_j\in i\R$ (therefore $\beta_j^2 \leq 0$). 
Thus, from \eqref{Gammacond5} we obtain $\alpha_4^2 \geq 1$, while from the second 
equation of \eqref{Gammacond1} we get $\alpha_4^2 \leq 1$. So the only solutions are 
$\alpha_4 = \pm 1$, $\alpha_j = \beta_j = \beta_4 = 0$ for $j=1,2,3$. It is easy to see 
that such solutions fulfill all the other conditions of \eqref{Gammacond1}, 
\eqref{Gammacond2}. We conclude that the unique solutions for $\Gamma$ are 
$\Gamma = \pm \gamma^4$.
\end{proof}
Now we take one of the two solutions of the previous proposition, say 
$\Gamma = \gamma^4$. Then:
$$D_h = \sum_{i=1}^3\gamma^i\delta_i,\quad\quad D_v = \gamma^4\delta_4.$$
In particular the spectral triple has the ``constant length fibers'' property and, 
$Z = 0$. Now we can build the ``3-dimensional orientation'':
$$\nu = i\Gamma\gamma = i\gamma^5\gamma^4 = i\gamma^1\gamma^2\gamma^3.$$
We have $\nu^2 = 1$, $\nu^* = \nu$ as it should be. In $2\times 2$ matrix notation 
$\nu$ is given by:
\begin{equation*}
 \nu = \left(\begin{array}{cc} 0 & -i \\ i & 0 \end{array}\right).
\end{equation*}
It is useful to write the down action of $D_h$ on the two eigenspaces of $\nu$. 
Clearly it is enough to know the action of the matrices $\gamma_j$ ($j = 1,2,3$). 
Let us consider the $0$-eigenspace $\H_0$ of $\delta_4$, and decompose it 
accordingly to $\gamma$: $\H_0 = \H_0^+\oplus\H_0^-$. Then any vector can be 
written as $v = v_+\oplus v_-$. Moreover, since $\Gamma = \gamma_4$ is 
an intertwiner between $\H_0^\pm$, these two spaces are isomorphic. 
If $\H_0 = \H_0^{(+)}\oplus \H_0^{(-)}$ accordingly to $\nu$, then:
\begin{equation}
 \begin{array}{l}
  v\in\H_0^{(+)} \;\Rightarrow v\;\mathrm{is\;of\;the\;form}\;\; v = w\oplus(-iw) \\
  v\in\H_0^{(-)} \;\Rightarrow v\;\mathrm{is\;of\;the\;form}\;\; v = w\oplus iw
 \end{array}
\end{equation}
for some $w\in\H^+$. Using \eqref{gammam}, we see that, for $j = 1,2,3$, 
$\gamma^j$ acts as $\pm\sigma^j$ on $\H_0^{(\mp)}$. We summarize these results
 in the following lemma.
\begin{lemma}\label{lemmaNC4T}
Each Hilbert space $\H_0^{(\pm)}$ is isomorphic to $\H_\tau\otimes\C^2$,
 where $\H_\tau$ is the GNS Hilbert space associated to the canonical tracial state 
on $\B = \T^3_\theta$. Furthermore, the $\gamma$ matrices, as operators on $\H$, 
when restricted to $\H_0^{(\pm)}$ act as $\mp \sigma^j$.
\end{lemma}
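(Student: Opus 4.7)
The plan is to assemble the chain of identifications already laid out in the paragraph preceding the lemma. First I would identify $\H_0$ as a $\B$-module. The GNS space of $\T^4_\theta$ admits an orthonormal basis $\{U_1^{n_1}U_2^{n_2}U_3^{n_3}U_4^{n_4} : n \in \Z^4\}$ (with a fixed ordering convention) on which $\delta_4$ acts diagonally with eigenvalue $n_4$. Its zero-eigenspace is therefore the closed span of the basis vectors with $n_4 = 0$, which coincides with the GNS space $\H_\tau$ of the invariant subalgebra $\B = \T^3_\theta$. Hence $\H_0 \cong \H_\tau \otimes \C^4$.

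Next I would decompose the spinor factor $\C^4$ in two steps. Splitting by $\gamma = \gamma^5$ gives $\H_0 = \H_0^+ \oplus \H_0^-$ with $\H_0^\pm \cong \H_\tau \otimes \C^2$, and $\Gamma = \gamma^4$ furnishes the unitary identification $\H_0^- \cong \H_0^+$ already used above; accordingly I write a generic vector as $v = w_+ \oplus w_-$ with $w_\pm \in \H_0^+$. The computation carried out just before the lemma shows that $\nu = i\gamma^5\gamma^4$ admits as $(\pm 1)$-eigenvectors exactly those with $w_- = \mp i w_+$, so the assignment $v \mapsto w_+$ is a unitary isomorphism $\H_0^{(\pm)} \xrightarrow{\sim} \H_0^+ \cong \H_\tau \otimes \C^2$, which yields the first assertion.

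For the second assertion, observe that $\gamma^j$ anticommutes with both $\gamma^4$ and $\gamma^5$, hence commutes with $\nu$, so $\gamma^j$ preserves $\H_0^{(\pm)}$ and it only remains to read off its action on the $\C^2$ factor. Substituting $v = w \oplus (\mp i w)$ into the explicit block form $\gamma^j = \bigl(\begin{smallmatrix} 0 & i\sigma^j \\ -i\sigma^j & 0 \end{smallmatrix}\bigr)$ from \eqref{gammam} and translating back through the $\gamma^4$-identification of $\H_0^-$ with $\H_0^+$, a direct matrix multiplication yields $\gamma^j v = (\mp\sigma^j w) \oplus (\pm i\sigma^j w)$, and therefore $\gamma^j$ acts as $\mp\sigma^j$ on $\H_0^{(\pm)}$ under the identification $v \leftrightarrow w$.

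There is no genuine obstacle — the statement is essentially a bookkeeping consolidation of decompositions already in hand, and the proof reduces to a short explicit matrix computation. The only care required is to keep straight three overlapping sign labels (the eigenvalues of $\gamma^5$, those of $\nu$, and the final sign of $\sigma^j$) together with the fact that the decomposition $v = w_+ \oplus w_-$ is mediated by the $\gamma^4$-intertwining of $\H_0^+$ and $\H_0^-$, rather than by a naive block-splitting of the spinor factor.
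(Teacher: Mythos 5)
Your proof is correct and follows essentially the same route as the paper, which derives the lemma in the paragraph immediately preceding it (the lemma is explicitly offered there as a summary of the computation just made): split $\H_0\cong\H_\tau\otimes\C^4$ by $\gamma=\gamma^5$, use $\Gamma=\gamma^4$ to identify the two chirality subspaces, describe the $\nu=\pm1$ eigenvectors as $v = w\oplus(\mp iw)$, and read off the action of $\gamma^j$ by a short matrix multiplication. You add a little welcome explicitness on the identification of $\H_0$ with $\H_\tau\otimes\C^4$ via the $U$-monomial basis. One small caution: when you speak of ``substituting $v = w\oplus(\mp iw)$ into the block form $\gamma^j = \bigl(\begin{smallmatrix} 0 & i\sigma^j \\ -i\sigma^j & 0 \end{smallmatrix}\bigr)$,'' note that that block form is written in the Dirac basis, where the blocks are the $\gamma^4$-eigenspaces, not the $\gamma^5$-eigenspaces in which your decomposition $w_+\oplus w_-$ lives; you do flag this with the remark about ``translating back through the $\gamma^4$-identification,'' and your final formula $\gamma^j v = (\mp\sigma^j w)\oplus(\pm i\sigma^j w)$ is right, but it would be cleaner to first rewrite $\gamma^j$ in the chiral (i.e.~$\gamma^5$-diagonal) basis, where it becomes $\bigl(\begin{smallmatrix} 0 & -i\sigma^j \\ i\sigma^j & 0 \end{smallmatrix}\bigr)$, before substituting.
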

Thus both the spectral triples are isomorphic to the ``standard'' one \cite{DS10} 
on the noncommutative 3-torus, with Dirac operators
$$D_0^{(\pm)} = \mp\sum_{j = 1}^3\sigma^j\delta_j.$$

Now we have to discuss the real structure. Since $J$ is antiunitary, we see that 
$[J,i\Gamma] = 0$. And, since $J\gamma = \gamma J$ and $J^2 = -1$ we can take 
$j_0^{(\pm)} = J$ (restricted to $\H_0^{(\pm)}$) and obtain that 
$(\B,\H_0^{(\pm)},D_0^{(\pm)},j_0^{(\pm)})$ are real spectral triples of KR-dimension 3.

\subsubsection*{Twisted Dirac operators}
As we have done for the noncommutative 2-torus, we can construct now the twisted 
Dirac operators. We present here only the main results; the proofs are omitted, 
since they are a direct generalization of the proofs of the analogue results in the 
2-dimensional case.
\begin{lemma}
 A $U(1)$ selfadjoint strong connection over $\A$ is a one-form
$$\omega = \gamma^4 + \sum_{j=1}^3 \gamma^j\omega_j,$$
where $\omega_j$ are selfadjoint elements of $\B$.
\end{lemma}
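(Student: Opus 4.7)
The plan is to adapt the four-step argument used above for $\T^2_\theta$, with only bookkeeping changes to accommodate four Clifford generators rather than two Pauli matrices. First I would reduce the space of candidate one-forms: since $D=\sum_{j=1}^4\gamma^j\delta_j$, every commutator $[D,c]$ with $c\in\A$ expands as $\sum_j\gamma^j\delta_j(c)$, so the linear span of elements $a[D,c]$ forces the ansatz $\omega=\sum_{j=1}^4\gamma^j\omega_j'$ with coefficients $\omega_j'\in\A$. The linear independence of $\gamma^1,\ldots,\gamma^4$, together with the fact that they act on the $\C^4$ tensor factor and hence commute with the representation of $\A$ on $\H_\tau$, makes the extraction of the coefficients $\omega_j'$ unambiguous, and the same linear independence immediately yields the compatibility hypothesis \eqref{eq43} from the analogous statement for $\sum_i p_i\delta_j(q_i)$.

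Next I would impose in turn the three conditions of Definition \ref{defcon}. The $U(1)$-invariance $[\delta,\omega]=0$, combined with $[\delta,\gamma^j]=0$ (different tensor factors), pins down $\delta_4(\omega_j')=0$, so each $\omega_j'$ lies in $\B$. For the vertical field condition I would exploit the key identity $\gamma^j=U_j^{-1}[D,U_j]$, which follows from $\delta_i(U_j)=\delta_{ij}U_j$ and the commutation $[\gamma^j,U_j]=0$; writing $\omega=\sum_j \omega_j' U_j^{-1}[D,U_j]$ and using $\delta(U_j)=\delta_{4j}U_j$, the required sum $\sum_i p_i\delta(q_i)$ collapses to $\omega_4'$, so the condition forces $\omega_4'=1$. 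The strongness condition I would then check on the four generators: on $U_1,U_2,U_3$ it is automatic since $\delta$ annihilates them and $[D,U_i]=\gamma^i U_i=U_i^{-1}[D,U_i]\cdot U_i^2\in\Omega^1_D(\B)\A$, while on $U_4$ the $\gamma^4$-term of $\omega$ cancels $[D,U_4]=\gamma^4 U_4$ exactly, leaving $-\sum_{j=1}^3\gamma^j U_4\omega_j$, which again factors through $\Omega^1_D(\B)\A$ because $\gamma^j=U_j^{-1}[D,U_j]\in\Omega^1_D(\B)$ for $j=1,2,3$.

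Selfadjointness of $\omega$ then follows at once from the commuting tensor factors and the hermiticity $(\gamma^j)^*=\gamma^j$: once $\omega_4'=1$ has been fixed, the constraint reduces to $\omega_j'=(\omega_j')^*$ in $\B$ for $j=1,2,3$. I do not anticipate a genuine obstacle; the one piece of care is the indexing of the four Clifford generators and the use of the explicit relation $\gamma^j=U_j^{-1}[D,U_j]$ to translate between $\Omega^1_D(\A)$ and its Clifford-coefficient form. Structurally the argument is identical to the $\T^2_\theta$ lemma, and the special features of the Dirac representation of $\Cl(4)$ play no role in these algebraic manipulations.
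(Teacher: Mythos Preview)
Your proposal is correct and follows exactly the route the paper intends: the paper omits the proof entirely, saying it is ``a direct generalization of the proofs of the analogue results in the 2-dimensional case,'' and your write-up is precisely that generalization, step for step. One cosmetic slip: in the strongness check for $U_i$ ($i=1,2,3$) you write $[D,U_i]=U_i^{-1}[D,U_i]\cdot U_i^2$, but the correct factorization is $[D,U_i]=(U_i^{-1}[D,U_i])\cdot U_i$; this does not affect the argument since $U_i^{-1}[D,U_i]=\gamma^i\in\Omega^1_D(\B)$ either way. You also verify strongness explicitly, which the paper's 2-torus proof (and hence its implicit 4-torus proof) does not bother to do.
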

Given a strong connection $\omega$, the twisted Dirac operator is given by the 
following
\begin{prop}
For any selfadjoint $U(1)$ strong connection $\omega$ the associated Dirac operator 
$D_\omega$ has the form
$$D_\omega = D_h - \sum_{j=1}^3 \gamma^j J\omega_jJ^{-1}\delta_4.$$
\end{prop}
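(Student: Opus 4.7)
The plan is to mirror the proof of Proposition \ref{propDomegaT2} for the 2-torus, adapted to the four-dimensional setup where the base spectral triples on $\T^3_{\theta'}$ are described by lemma \ref{lemmaNC4T}. Specifically, on $\H_0^{(\pm)} \simeq \H_\tau \otimes \C^2$ we have $D_0^{(\pm)} = \mp \sum_{j=1}^3 \sigma^j \delta_j$, and, as noted just before this proposition, the real structure to use on the base is $j_0^{(\pm)} = J$.

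First I would fix $k \in \Z$, pick $h_0 \in \H_0^{(\pm)}$ and $a \in \A^{(k)}$ (so $\delta_4(a) = k a$), and apply the definition \eqref{eqDomegak},
$$D_\omega^{(k,\pm)}(h_0 a) \;=\; (D_0^{(\pm)} h_0)\,a \;+\; h_0\,\nabla_\omega(a) \;=\; (D_0^{(\pm)} h_0)\,a + h_0\,[D,a] - k\, h_0 a\,\omega.$$
Using $h_0 [D,a] = D(h_0 a) - (Dh_0) a$ together with $\delta_4(h_0) = 0$ gives
$$D_\omega^{(k,\pm)}(h_0 a) = D(h_0 a) + \bigl((D_0^{(\pm)} - D_h)h_0\bigr)\,a - \gamma^4 \delta_4(h_0) a - k\, h_0 a\,\omega,$$
and the middle two terms vanish since $D_0^{(\pm)}$ agrees with $D_h$ on $\H_0^{(\pm)}$ and $\delta_4 h_0 = 0$.

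Next I would substitute $\omega = \gamma^4 + \sum_{j=1}^3 \gamma^j \omega_j$ with $\omega_j = \omega_j^* \in \B$, and convert the right action on $h_0 a \in \H_k^{(\pm)}$ into a left one via the real structure, using \eqref{JaJ} and the selfadjointness of $\omega_j$ to write $h\,\omega_j = J\omega_j J^{-1}\,h$. The contribution of the vertical part $\gamma^4$ of the connection yields
$$-k\, h_0 a\,\gamma^4 = -\gamma^4 \delta_4(h_0 a),$$
since $\gamma^4$ commutes with $J$ (as $J = J_0 \otimes \gamma^4\gamma^2 \circ \mathrm{c.c.}$) and $\delta_4(h_0 a) = k\, h_0 a$. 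This cancels exactly the $\gamma^4 \delta_4$ part of $D$ acting on $h_0 a$, leaving $D_h(h_0 a)$. The horizontal terms $\gamma^j \omega_j$ contribute
$$-\sum_{j=1}^3 \gamma^j\, J\omega_j J^{-1}\,\delta_4(h_0 a),$$
since $[\gamma^j, J] = 0$ for $j = 1,2,3$ up to the sign absorbed by $\omega_j$ being selfadjoint (equivalently one uses the first-order commutation of $J\omega_j J^{-1}$ with $\gamma^j$). Assembling the pieces gives
$$D_\omega^{(k,\pm)}(h_0 a) \;=\; D_h(h_0 a) - \sum_{j=1}^3 \gamma^j\, J\omega_j J^{-1}\,\delta_4(h_0 a).$$

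Finally I would take the closure of the direct sum over all $k \in \Z$ and both chiralities, invoking density assumption (i) and the isomorphism (ii) to extend from products $h_0 a$ to all of $\H_k^{(\pm)}$, and then combine the two $\pm$ blocks to obtain the stated operator on $\H$. The main potential obstacle is the careful bookkeeping of how $J$ interacts with $\gamma^4$ versus $\gamma^1, \gamma^2, \gamma^3$ — in particular verifying that the $\gamma^4 \delta_4$ contributions from $D$ and from the vertical part of $\omega$ cancel with the correct sign, so that only the horizontal correction $-\sum_j \gamma^j J\omega_j J^{-1} \delta_4$ remains; once these sign conventions are pinned down, the argument is a line-by-line translation of the two-dimensional case.
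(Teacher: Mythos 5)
Your final formula is correct and your overall strategy is exactly the paper's: the paper in fact omits this proof with the remark that it is a direct generalization of the 2-torus computation, which is what you carry out. However, the step that you yourself flag as ``the main potential obstacle'' --- the interaction of $J$ with the $\gamma^j$ --- is where your bookkeeping goes wrong, and you reach the right answer only because two sign errors cancel.

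Concretely, you assert that $\gamma^4$ commutes with $J$ and that $[\gamma^j,J]=0$ for $j=1,2,3$. Both claims are false. Since $\Gamma=\gamma^4$, the commutation $J\gamma^4=-\gamma^4 J$ is one of the defining axioms of even projectability, so $\gamma^4$ must \emph{anti}commute with $J$. For the remaining $\gamma^j$ one can either compute directly from $J=J_0\otimes(\gamma^4\gamma^2\circ c.c.)$, or argue abstractly: in $KR$-dimension $4$ one has $JDJ^{-1}=D$, while equivariance gives $J\delta_jJ^{-1}=-\delta_j$ for each $j$; writing $D=\sum_j\gamma^j\delta_j$ and using the linear independence of the $\delta_j$ forces $J\gamma^jJ^{-1}=-\gamma^j$ for \emph{all} $j=1,\dots,4$. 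The reason your final formula nonetheless comes out right is that the right action of a one-form on $\H$ carries a compensating minus sign, cf.\ \eqref{homega}: $h\,\xi=-J\xi^*J^{-1}h$. You appear to have dropped that minus sign (you write $h\,\omega_j = J\omega_j J^{-1}h$, which is the formula \eqref{JaJ} for \emph{algebra} elements, not one-forms). For instance, the vertical contribution is correctly obtained as
$$-k\,h_0a\,\gamma^4 \;=\; -k\bigl(-J\gamma^4J^{-1}\bigr)(h_0a) \;=\; -k\bigl(-(-\gamma^4)\bigr)(h_0a) \;=\; -\gamma^4\delta_4(h_0a),$$
where the first minus sign comes from \eqref{homega} and the second from $J\gamma^4=-\gamma^4 J$; similarly each horizontal term picks up a minus from \eqref{homega} and another from $J\gamma^j=-\gamma^j J$, giving $-\gamma^j J\omega_jJ^{-1}\delta_4(h_0a)$. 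Once these two signs are tracked correctly, the rest of your argument (density and the $\pm$ decomposition) is fine.
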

\begin{cor}
The only connection compatible with $D$ is $\omega = \gamma^4$. Moreover, 
let $\displaystyle\omega = \gamma^4 + \sum_{j=1}^3 \gamma^j\omega_j$ be 
a selfadjoint strong connection. Then the following Dirac operator,
$$\mathcal{D}_\omega = D - \sum_{j=1}^3\gamma^jJ\omega_jJ^{-1}\delta_4,$$
is compatible with $\omega$.
\end{cor}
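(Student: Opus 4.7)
The overall strategy is to deduce both claims from the explicit formula $D_\omega = D_h - \sum_{j=1}^3 \gamma^j J\omega_j J^{-1}\delta_4$ established in the proposition immediately preceding this corollary, combined with Definition \ref{defcomp} of compatibility.

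For the uniqueness assertion, I would invoke compatibility as the requirement that $D_\omega = D_h$ on a dense subspace, which is equivalent to $\sum_{j=1}^3 \gamma^j J\omega_j J^{-1}\delta_4 = 0$ on that subspace. Decomposing $\H = \bigoplus_k \H_k$ according to the spectrum of $\delta_4$, on each $\H_k$ with $k \neq 0$ the derivation $\delta_4$ acts as the invertible scalar $k$, so the condition reduces to $\sum_{j=1}^3 \gamma^j J\omega_j J^{-1} = 0$. The Clifford generators $\gamma^1,\gamma^2,\gamma^3$ are linearly independent members of the basis \eqref{M4Cbasis} of $M_4(\C)$, while the operators $J\omega_j J^{-1}$ act only on the $\H_\tau$ factor; linear independence in the Clifford direction therefore forces each $\omega_j$ to vanish, leaving $\omega = \gamma^4$ as the unique connection compatible with $D$.

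For the second part of the corollary, I would first recognise that $\mathcal{D}_\omega = \Gamma\delta_4 + D_\omega$ (with $\Gamma = \gamma^4$ and $\delta = \delta_4$), so the general proposition of Section \ref{tDo} ensures that $(\A,\H,\mathcal{D}_\omega)$ is itself a projectable spectral triple with constant length fibers whose horizontal part is $D_\omega$. To verify compatibility with $\omega$ in the sense of Definition \ref{defcomp}, I would recompute the twisted Dirac of $\mathcal{D}_\omega$ by applying the construction of Section \ref{tDo} with $\mathcal{D}_\omega$ in place of $D$. Since $\delta_4$ vanishes on $\H_0$, the restriction $\mathcal{D}_\omega|_{\H_0}$ coincides with $D_h|_{\H_0}$, so the base Dirac operators $(\mathcal{D}_\omega)_0^{(\pm)}$ are precisely $D_0^{(\pm)}$. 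Because the same base Dirac and the same connection $\omega$ feed into the twist construction, the computation of the preceding proposition reproduces the same horizontal-plus-twist expression $(\mathcal{D}_\omega)_\omega = D_h - \sum_{j=1}^3 \gamma^j J\omega_j J^{-1}\delta_4 = D_\omega = (\mathcal{D}_\omega)_h$, which is exactly the required compatibility.

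The main obstacle is ensuring that the twist construction of Section \ref{tDo} may be reapplied verbatim to $\mathcal{D}_\omega$: one must check that the hypotheses (i)--(iii) of that section hold with $\mathcal{D}_\omega$ replacing $D$, in particular that the bounded operator $Z_{\mathcal{D}_\omega} = \mathcal{D}_\omega - (\mathcal{D}_\omega)_h - \Gamma\delta_4$ vanishes (which it does by direct substitution), so that $Z'_{\mathcal{D}_\omega} = 0$ and no spurious correction appears in the twist formula. Once this technical point is settled, the compatibility identity follows by the same computation as in the 2-torus case recorded in Proposition \ref{propDomegaT2}.
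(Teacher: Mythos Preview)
Your proposal is correct and follows essentially the same route as the paper. The paper omits the proof entirely, declaring it a direct generalization of the 2-torus case, where the corresponding corollary and proposition are each dispatched in one line by invoking the previous proposition's formula for $D_\omega$ together with Definition~\ref{defcomp}; your argument unpacks exactly this reasoning, with the added (and appropriate) care of verifying that the twist construction may be rerun with $\mathcal{D}_\omega$ in place of $D$.
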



\section{Theta deformations. $S^3_\theta$ as a $U(1)$-bundle over $S^2$}

Let $\lambda$ be a complex number of module one, which is not a
root of unity. 
A general construction of the Dirac operator and isospectral spectral triple on 
a Rieffel deformation quantization $M_\theta$ \cite{R} of any compact spin Riemannian
manifold M whose isometry group has rank $\geq 2$ has been described in \cite{cl01}.
Such theta-deformed spectral triples exist for instance over 
spheres $S^{N-1}$ and over $\bR^N$, with $N\geq 4$.
%
%
We will be interested in the lowest dimensional compact case of $S^3_\theta$, 
known also as a Matsumoto sphere, which will play the role of the total space of our
U(1) bundle, and will give explicit formula for the theta deformed Dirac operator. 
Before however we briefly recall  an equivalent but more `functorial'  realization of the 
theta-deformed 
spectral triples as introduced in  \cite{CD-V}, which we shall need in order to 
assure that our various assumptions made in sect. \ref{tst} are satisfied.

We restrict to the case of $S^{3}_\theta$. We denote its generators by $z_{m,\te}$, 
while the usual generators (complex coordinates) of $S^{3}$ by ${z_m}$
($z_{m,}=z_{m,0}$).
Let $\alpha$ be the  standard action of $\Tb^2$ on 
$\Tb^2$ on $S^{3}$ which consist of multiplying the generators
$z_{m}$, $m=1, 2$ by the relevant generators $u_m$ of $\Tb^2$. 
We denote by the same symbol $\alpha$ also the pullback action on $C^\infty(S^{3})$.
Let $\Tb^2_{\theta}$  be the well known noncommutative torus 
and let $\beta $  denote the usual action of $\Tb^2$ on 
$C^\infty(\Tb^2_{\theta})$, which consist of multiplying the generators
$u_{m,\te}$, $m=1, 2$; by the relevant $u_m$.
Then there is a `splitting'  isomorphism 
\begin{equation}\label{splitiso}
\kappa:C^\infty(S^{3}_{\theta}) \approx 
\left(
C^\infty(S^{3})\widehat\otimes C^\infty(\Tb^2_{\theta})
\right)^{ \alpha\otimes \beta^{-1}},
\end{equation}
given on the generators by 
\begin{equation}\label{split}
\kappa (z_{m,\te})={z_m}\otimes	u_{m,\te}, \quad m=1, 2;
\end{equation}
with the fixed point subalgebra of the 
action $\alpha\otimes\beta^{-1}$ of $\Tb^2$.
%
Here $\widehat\otimes$ denotes a suitable completion of $\otimes$.

Next the above construction was extended in \cite{CD-V} from theta-deformed manifolds  
to those vector bundles over $M$ which admit a direct lift of the action $\alpha$ of   $\Tb^2$ 
\begin{equation}\label{splitisovec}
\kappa:C^\infty(S^{3}_{\theta}, \Sigma) \approx 
\left( 
C^\infty(S^{3}, \Sigma)\widehat\otimes C^\infty(\Tb^2_{\theta})
\right)^{ \alpha\otimes \beta^{-1}}.
\end{equation}
The result is canonically a topological bimodule over $C^\infty(S^{3}_{\theta})$.
Using the usual trace functional $ \tau$ on $C^\infty(\Tb^2_{\theta})$
and given a hermitian structure on $\Sigma$
there is also a hermitian structure 
$$
   (\psi\otimes t, \psi'\otimes t')= (\psi,\psi') \tau (t^\ast t') 
$$
with respect to which $C^\infty(S^{3}_{\theta})$ can be completed to a Hilbert space 
$\H_\theta$.
Moreover, any (also densely defined) operator $D$ which commutes with the action 
$\alpha$ 
defines an operator
 $$D_{\theta} := (D \otimes I) \downharpoonright C^\infty(S^{3}_{\theta}, \Sigma).$$

Now  $S^{3}$ is a Riemannian spin manifold
and the action $\alpha$ of $\Tb^2$ is given by the
left and/or right multiplications by 
particular diagonal elements ${\rm diag}(u, u^*) $ of $SU(2)$, where we identify 
$S^3\equiv SU(2)$.
Since $SU(2) \equiv Spin(3)$ is just a subgroup of $Spin(4)$ which in turn
is the lift to spinors of the connected isometry group $SO(4)$ of   $S^3$,
both the $U(1)$-factors, and so $\Tb^2$ itself lift directly (not just projectively)
to the spinor bundle $\Sigma$ and to its smooth sections called Dirac spinors.
Next, if the $\alpha$-invariant $D$ is the Dirac operator on $\Sigma$,
a spectral triple $(C^\infty(S^{3}_{\theta}),\H_{\theta}, D_{\theta})$
has been defined in \cite{CD-V}.
Here
$$
\H_\theta := \left(L^2(S^{3},\Sigma) \widehat\otimes L^2( \Tb^2_{\theta})\right)
^{\alpha\otimes\beta^{-1}}
$$
and  $D_{\theta}$ is the closure of $D\otimes I$. 
Similarly the 
antilinear charge conjugation operator $J$ can be theta-deformed as
$$J_{\theta}=J\otimes \ast .$$
As shown in \cite{CD-V} the spectral triple $(C^\infty(S^{3}_{\theta}),\H_{\theta}, 
D_{\theta})$ 
together with the real structure $J_{\theta}$ 
satisfies all additional seven axioms \cite{C95,C96} required for a 'noncommutative manifold'.\\

It is also not difficult to verify that the theta deformation
(i.e. the twisting construction as above), behaves 'functorially' under the maps between 
manifolds
(in particular under bundle projection) and respects the properties of principal
$U(1)$-bundles\footnote{for simplicity we assume that the $U(1)$-action coincides 
with the action of the second factor of  $\Tb^2$ above as well as our requirements 
for projectable spectral triples and compatible connections.
Then also the requirements (i-iii) in section \ref{tDo} can be seen to be satisfied.}.
We illustrate this observations by the example of the $\theta$-deformation
of the $U(1)$-Hopf fibration $S^3\to S^2$.

\subsection{$\theta$-deformation of the $U(1)$-Hopf fibration $S^3\to S^2$}


The $U(1)$ action on Matsumoto algebra $\CA(S^3_\theta)$ provides an interesting 
example of a noncommutative Hopf fibration with the commutative  invariant subalgebra
(identified with the algebra of functions on the sphere $S^2$).  
We shall explicitly describe here in a concrete Hilbert space basis 
the Dirac operator on $\CA(S^3_\theta)$ arising from the construction in \cite{cl01}
and will show that it is consistent with the base-space equivariant Dirac operator 
and the strong connection of the deformed magnetic monopole \cite{BrzSi}.
We use the description of the algebra of $\CA(S^3_\theta)$ as generated
by normal operators $a,b$ and their hermitian conjugates, which satisfy
the following relations:
\begin{equation}
ab = \lambda ba, \;\; ab^* = \bar{\lambda} b^*a, \;\; aa^* + bb^* =1,
\end{equation}
where $\lambda = e^{2\pi i \theta}$.

We begin with the representation of the algebras acting on the Hilbert space 
of square integrable function over $S^3$. Using the standard orthonormal basis, 
we have the explicit formulae:
\begin{equation}
\begin{aligned}
\pi_0(a) \ket{l,m,n} &=& \lambda^{-n} \left(
\frac{\sqrt{l+1+m}\sqrt{l+n+1}}{\sqrt{2l+1}\sqrt{2l+2}}
\ket{l^+, m^+, n^-} \right. \\
&& \left. - \frac{\sqrt{l-m}\sqrt{l-n}}{\sqrt{2l}\sqrt{2l+1}}
\ket{l^-,m^+,n^-}\right) ,
\end{aligned}
\end{equation}
\begin{equation}
\begin{aligned}
\pi_0(b) \ket{l,m,n} &=& \lambda^{n} \left(
\frac{\sqrt{l+1+m}\sqrt{l-n+1}}{\sqrt{2l+1}\sqrt{2l+2}}
\ket{l^+, m^-, n^-} \right.\\
&& \left. + \frac{\sqrt{l-m}\sqrt{l+n}}{\sqrt{2l}\sqrt{2l+1}}
\ket{l^-,m^-,n^-}\right) ,
\end{aligned}
\end{equation}
where $l^\pm,m^\pm,n^\pm$ is a shortcut notation for
$l \pm \oh, m\pm\oh,n\pm\oh$. Here $l=0,\oh,1,\ldots$ and both $m,n$ are in 
$-l, l-1+1,\ldots,l-1,l$.
To pass to a spinor representation we need only to double the Hilbert space, so
the spinor representation of $S^3_\theta$ is diagonal:
\begin{equation}
 \pi(x) = \left( \begin{array}{cc} \pi_0(x) & 0
 \\ 0 & \pi_0(x) \end{array} \right),
\end{equation}
The real structure $J$ is off-diagonal:
\begin{equation}
J = \left( \begin{array}{cc} 0 & J_0^-
\\ J_0^+  & 0 \end{array} \right),
\end{equation}
where $J_0^\pm$ is a canonical equivariant antilinear map, which
maps the algebra to its commutant:
\begin{equation}
J_0^\pm \ket{l,m,n} = \pm i^{2(m+n)} \ket{l,-m,-n}.
\end{equation}
Let us verify that $J^2=-1$:
$$
\begin{aligned}
J^2 \ket{l,m,n,\pm}
&= \pm J_0^\mp \left( i^{2(m+n)} \ket{l,-m,-n,\mp} \right) \\
&= - i^{-2(m+n)} i^{-2(m+n)} \ket{l,m,n,\pm} = - \ket{l,m,n,\pm},
\end{aligned}
$$
where we have used that $m+n$ is always integer.

As the Dirac operator we take the following densely defined operator:
\begin{equation}
\begin{aligned}
D \ket{l,m,n,+} &=  \left( r (m+\oh) + \frac{\alpha}{4} \right) \,\ket{l,m,n,+} \\
& +  s \sqrt{l+1+m} \sqrt{l-m} \, \ket{l,m+1,n,-}, \\
D \ket{l,m,n,-} &= \left(- r (m-\oh) + \frac{\alpha}{4} \right) \, \ket{l,m,n,-} \\
& +  s^* \sqrt{l-m+1} \sqrt{l+m} \, \ket{l,m-1,n,+},
\end{aligned}
\label{Diracs3}
\end{equation}
where $s$ is arbitrary complex number and $r$ and $\alpha$ are real. 
It is the most general Dirac-type operator, which is equivariant under the
slightly reduced symmetry of the agebra, which is the product of twisted  
SU(2) and of U(1). For $r=s=\alpha=1$ this operator is isospectral to the 
fully equivariant Dirac operator over $S^3$ (but with the radius of the 
sphere r=$\frac{1}{2}$). 

It is easy to verify that this operator satisfies the usual conditions 
$D^\dagger = D$ and $JD = DJ$. 
\begin{lemma}
The following $U(1)$ action on the Hilbert space:
$$ \ket{l,m,n,\pm}  \to \phi \cdot \ket{l,m,n,\pm} = 
e^{i (2 m \pm 1) \phi} \ket{l,m,n,\pm}, $$
commutes with the Dirac operator $D$ and generates, by conjugation,
the following action on the algebra:
$$ a \to e^{i\phi} a, \;\;\; b \to e^{-i\phi} b. $$

The corresponding unbounded selfadjoint operator $\delta$ on the Hilbert space $\CH$ 
is:
$$ \delta \ket{l,m,n,\pm} = (2 m \pm 1) \ket{l,m,n,\pm}, $$ 
and the above real spectral triple is $U(1)$ equivariant.
\end{lemma}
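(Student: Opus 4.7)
The plan is to verify the three distinct claims of the lemma by direct computation on the basis: (i) $U_\phi:=\exp(i\phi\delta)$ commutes with $D$; (ii) conjugation by $U_\phi$ implements the stated action on the generators $a,b$; (iii) $\delta$ has the properties required by Definition~\ref{defU1equivst}. The underlying strategy is that all relevant operators have a simple bi-graded structure with respect to the labels $m$ and the spinor sign $\pm$, so the entire verification reduces to matching phases.

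For (i) I would first argue that $U_\phi$ is a well-defined unitary, since $\{|l,m,n,\pm\rangle\}$ is an orthonormal basis and $(2m\pm 1)$ is real (and integer, because $l$ is integer iff $m$ is integer). To prove $[U_\phi,D]=0$ I apply both sides to $\ket{l,m,n,+}$: the diagonal part of $D$ preserves the basis vector and clearly commutes with $U_\phi$; for the off-diagonal part note that $D$ sends $\ket{l,m,n,+}$ into a multiple of $\ket{l,m+1,n,-}$, whose $U_\phi$-eigenvalue is $e^{i(2(m+1)-1)\phi}=e^{i(2m+1)\phi}$, exactly the eigenvalue on $\ket{l,m,n,+}$. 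The same cancellation occurs for $\ket{l,m,n,-}$, where $D$ shifts $m\to m-1$ and flips the spinor sign. This also shows $[\delta,D]=0$ on the common core, since $\delta$ is the self-adjoint generator of $U_\phi$.

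For (ii) I inspect the explicit formulae for $\pi_0(a)$ and $\pi_0(b)$. The operator $\pi(a)$ sends $(l,m,n,\pm)$ to vectors with labels $(l^{\pm},m+\tfrac12,n-\tfrac12,\pm)$, so on such a target the $U_\phi$-eigenvalue becomes $e^{i(2(m+\frac12)\pm 1)\phi}=e^{i(2m\pm 1+1)\phi}$. Hence $U_\phi \pi(a) U_\phi^{-1}\ket{l,m,n,\pm}=e^{i\phi}\pi(a)\ket{l,m,n,\pm}$. The analogous calculation for $\pi(b)$, which shifts $m\to m-\tfrac12$, yields the phase $e^{-i\phi}$. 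This shows the action and also identifies the infinitesimal derivation $\delta$ on $\A$: $\delta(a)=a$, $\delta(b)=-b$. The derivation property $\delta(\pi(a)\psi)=\pi(\delta(a))\psi+\pi(a)\delta(\psi)$ then follows by differentiating the group identity $U_\phi\pi(x)U_\phi^{-1}=\pi(\alpha_\phi(x))$ at $\phi=0$, applied to $\psi$ in the common core.

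Finally for (iii) it remains to check $\delta J+J\delta=0$, since we are in the odd case (no $\gamma$). A direct computation on the basis is:
\begin{equation*}
\delta J\ket{l,m,n,\pm}=\pm i^{2(m+n)}\delta\ket{l,-m,-n,\mp}=\pm i^{2(m+n)}(-2m\mp 1)\ket{l,-m,-n,\mp}=-(2m\pm 1)J\ket{l,m,n,\pm},
\end{equation*}
which coincides with $-J\delta\ket{l,m,n,\pm}$. The spectrum of $\delta$ equals $\Z$: indeed $2m\pm 1$ is odd when $l$ (hence $m$) is integer and even when $l,m$ are half-integers, covering all of $\Z$. Combined with the self-adjointness of $\delta$, its common core with $D$ and the verified commutation/anticommutation relations, this completes the $U(1)$-equivariance check. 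The only mildly subtle point is bookkeeping the sign conventions in $J$ when it crosses $\delta$; once one observes that $J$ reverses both $m$ and the spinor label, this is routine.
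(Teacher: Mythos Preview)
Your proof is correct and follows the same approach as the paper, namely a direct verification on basis vectors that the phase shifts match under the action of $D$. Your argument is in fact more complete than the paper's own proof, which explicitly checks only the commutation of the $U(1)$ action with $D'$ and leaves the conjugation action on the algebra and the anticommutation $\delta J + J\delta = 0$ implicit.
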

\begin{proof}
We compute the commutation of $D\rq{}= D - \frac{\alpha}{4} \id$ with the action 
of  $U(1)$:
$$ 
\begin{aligned}
& \phi \cdot (D' \ket{l,m,n\pm} = \phi \cdot \left(
\pm r (m \pm \oh) \,\ket{l,m,n,\pm} 
    + s^{\pm} \sqrt{l+1\pm m} \sqrt{l \mp m} \, \ket{l,m\pm 1,n, \mp} \right) \\
&= \pm r (m \pm \oh) \, e^{i(2m \pm 1)} \ket{l,m,n,\pm} 
         + s^{\pm} \sqrt{l+1\pm m} \sqrt{l \mp m} \, e^{i(2(m \pm 1) \mp 1)} 
\ket{l,m \pm 1,n,\mp} \\
&=  e^{i(2m \pm 1)} \left( \pm r (m \pm \oh) \,  \ket{l,m,n,\pm} 
         + s^{\pm} \sqrt{l+1\pm m} \sqrt{l \mp m} \, \ket{l,m \pm 1,n,\mp} \right) \\
&= D' (\phi \cdot \ket{l,m,n,\pm} ).        
\end{aligned}
$$
Since $D$ differs from $D'$ only by a multiple
of the unit operator, it satisfies the same identity.
\end{proof}

The invariant subalgebra $\CA^{U(1)}$ is generated by $B=b a, B^*=a^* b^*$ and 
$A = aa^*-\oh$, with the relations:
$$ AB= BA, \;\;\; AB^* = B^* A, \;\;\; A^2 + BB^* = \frac{1}{4}, $$
and therefore we shall identify it with the algebra of functions on the 
two-dimensional sphere, $\CA(S^2)$.
\begin{lemma}
There exists an operator $\Gamma$, $\Gamma^2 =1$ and $\Gamma J = -J \Gamma$
such that the spectral triple over $\CA(S^3_\theta)$ is projectable.
\end{lemma}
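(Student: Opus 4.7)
The plan is to exhibit $\Gamma$ explicitly in the orthonormal basis $\ket{l,m,n,\pm}$ of $\CH$. Motivated by the general structure of spinor $\Z_2$-gradings on a doubled Hilbert space used in the preceding sections on the noncommutative tori, I would try the simple ansatz
\[
\Gamma\,\ket{l,m,n,\pm}\;=\;\pm\,\ket{l,m,n,\pm},
\]
i.e.\ the $\mathrm{diag}(+\id,-\id)$ operator with respect to the $\pm$-splitting of $\CH$, which is the analogue of the $\sigma^3$-type grading acting on the spinor $\C^2$ factor.

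Given this ansatz, I would then check the conditions of Definition \ref{defprojoddst}, applicable in $KR$-dimension $3$ (inherited from the $3$-dimensional spin geometry of $S^3$, and consistent with the previously verified $J^2=-1$ and $JD=DJ$). The identities $\Gamma^2=1$ and $\Gamma^*=\Gamma$ are immediate from the diagonal form. The commutator $[\Gamma,\pi(a)]=0$ for every $a\in\CA(S^3_\theta)$ follows because $\pi$ is block-diagonal with the same $\pi_0$ on both spinor components, so it preserves each $\pm$-subspace. The commutator $[\Gamma,\delta]=0$ holds since $\delta$ acts by the scalar $(2m\pm 1)$ on $\ket{l,m,n,\pm}$ and therefore commutes with every diagonal sign operator on the $\pm$-grading.

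The key step is the anticommutation $\Gamma J=-J\Gamma$. Using the off-diagonal block form of $J$ together with $J_0^{\pm}\ket{l,m,n}=\pm i^{2(m+n)}\ket{l,-m,-n}$, both $\Gamma J\ket{l,m,n,\pm}$ and $J\Gamma\ket{l,m,n,\pm}$ produce the same vector $\ket{l,-m,-n,\mp}$ up to scalars; however $\Gamma$ contributes a sign $\pm 1$ determined by the $\pm$-slot of its input, so applying $\Gamma$ before versus after $J$ (which interchanges the $\pm$ slots) reverses this sign, giving exactly the required anticommutation. The main potential obstacle is simply identifying the right ansatz; once that is done, no new ideas are needed, and the further verification of the constant-length-fibers property and of the compatibility of the resulting horizontal Dirac operator with the deformed monopole connection of \cite{BrzSi}, while substantially more involved, lies outside the scope of the stated lemma.
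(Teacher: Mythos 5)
Your proposal is correct and takes the same approach as the paper: the paper also takes $\Gamma\ket{l,m,n,\pm}=\pm\ket{l,m,n,\pm}$ and merely asserts that "all conditions for $\Gamma$ are satisfied," whereas you spell out the verification of each condition (diagonal form gives $\Gamma^2=1$, $\Gamma^*=\Gamma$, commutation with the block-diagonal $\pi$ and diagonal $\delta$, and the off-diagonal block structure of $J$ gives the required anticommutation for $KR$-dimension $3$).
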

\begin{proof}
If we take:
$$\Gamma \ket{l,m,n,\pm} = \pm \ket{l,m,n,\pm}, $$
then it is easy to see that all conditions for $\Gamma$ are satisfied. 
\end{proof}

The horizontal Dirac operator is:
\begin{eqnarray*}
D_h \ket{l,m,n, +} &= s   \sqrt{l+1+m} \sqrt{l-m} \, \ket{l,m+1,n,-} \\
D_h \ket{l,m,n, -} &= s^* \sqrt{l-m+1} \sqrt{l+m} \, \ket{l,m-1,n,+}
\end{eqnarray*}
and we have:
\begin{lemma}
The noncommutative $U(1)$ principal bundle has fibers of constant length $\frac{2}{r}$
\end{lemma}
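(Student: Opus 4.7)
The plan is to verify Definition \ref{defU1constlength} directly by exhibiting the decomposition $D = D_h + D_v + Z$ in the explicit basis $\ket{l,m,n,\pm}$, with $l=\frac{2}{r}$, and checking that the residual operator $Z$ is scalar.

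First I would compute the action of the vertical Dirac operator $D_v := \tfrac{r}{2}\Gamma\delta$ on basis vectors. Using $\Gamma\ket{l,m,n,\pm}=\pm\ket{l,m,n,\pm}$ and $\delta\ket{l,m,n,\pm}=(2m\pm 1)\ket{l,m,n,\pm}$ from the preceding lemmas, one obtains
\begin{equation*}
D_v\ket{l,m,n,\pm}=\tfrac{r}{2}(\pm)(2m\pm 1)\ket{l,m,n,\pm}
=\bigl(\pm r m+\tfrac{r}{2}\bigr)\ket{l,m,n,\pm}.
\end{equation*}

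Next I would subtract $D_h$ and $D_v$ from the explicit Dirac operator \eqref{Diracs3}. The off-diagonal terms $s\sqrt{l+1+m}\sqrt{l-m}\,\ket{l,m+1,n,-}$ and $s^*\sqrt{l-m+1}\sqrt{l+m}\,\ket{l,m-1,n,+}$ appearing in $D$ are precisely the horizontal part $D_h$ computed just above the lemma. The diagonal parts of $D$ are $(r(m+\tfrac12)+\tfrac{\alpha}{4})$ on the $+$ component and $(-r(m-\tfrac12)+\tfrac{\alpha}{4})$ on the $-$ component, which match exactly the eigenvalues of $D_v$ up to the constant shift $\tfrac{\alpha}{4}$. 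Hence
\begin{equation*}
Z = D-D_h-D_v = \tfrac{\alpha}{4}\,\id .
\end{equation*}

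Since $Z$ is a real scalar multiple of the identity, it is bounded, selfadjoint and commutes with every operator — in particular with the representation of $\A$, so the condition \eqref{za} of Definition \ref{defU1constlength} is trivially satisfied. For the same reason assumption (iii) of Section \ref{tDo} holds with $Z'=Z$. This proves that the fibers have constant length $l=\tfrac{2}{r}$.

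There is no substantial obstacle here: the only point requiring care is the bookkeeping of the $\pm$ signs in the eigenvalues of $\Gamma\delta$ versus those of the diagonal part of $D$, and checking that the constant $\alpha$ (which was introduced as a free parameter in the Dirac operator) drops out into a scalar, not a $\Gamma$- or $\delta$-dependent term. Once these signs are tracked correctly, the statement is an immediate consequence of the explicit formulas.
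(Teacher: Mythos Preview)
Your proof is correct and follows exactly the same approach as the paper: the paper's proof simply asserts that a direct computation yields $D = D_h + \tfrac{r}{2}\Gamma\delta + Z$ with $Z=\tfrac{\alpha}{4}\,\id$, and you have carried out that computation explicitly in the basis $\ket{l,m,n,\pm}$. The only cosmetic issue is the overloading of the symbol $l$ for both the fiber-length parameter and the basis label, but this is inherited from the paper's own notation and causes no mathematical confusion.
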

\begin{proof}
 Indeed, a simple computation shows the identity valid on the dense subset 
of the Hilbert space $\CH$:
\begin{equation}
 D = D_h + \frac{r}{2} \Gamma \delta + Z, \label{S3split}
\end{equation}
where $Z = \frac{\alpha}{4} \id$.
\end{proof}

Next, let us look at the invariant subspace of the Hilbert space $\CH$, by construction
it shall provide us with a real spectral triple over the invariant subalgebra.

\begin{lemma}
The spectral triple obtained by restriction of $D$, $J$, and of the operators $A,B$ 
to the invariant subspace of $\CH$ is unitarily equivalent with the standard equivariant 
spectral triple over $S^2$.
\end{lemma}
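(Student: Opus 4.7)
My plan is to identify $\CH^{U(1)}:=\ker\delta$ explicitly, observe that $D$, $J$ and the elements of $\CA^{U(1)}$ all leave it invariant, and then match the restricted data with the classical $SU(2)$-equivariant Dirac spectral triple on $S^2$.

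First, preservation is automatic: $[\delta,D]=0$ and $[\delta,\CA^{U(1)}]=0$ are built in, while $J\delta+\delta J=0$ from definition \ref{defU1equivst} implies $J\ker\delta\subseteq\ker\delta$. The formula $\delta\ket{l,m,n,\pm}=(2m\pm 1)\ket{l,m,n,\pm}$ shows that $\ker\delta$ has an orthonormal basis consisting of the vectors $\ket{l,-\oh,n,+}$ and $\ket{l,\oh,n,-}$, with $l\in\{\oh,\tfrac{3}{2},\tfrac{5}{2},\ldots\}$ and $n\in\{-l,-l+1,\ldots,l\}$.

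Second, I would compute the restricted operators in this basis. From \eqref{Diracs3}, $D$ exchanges the two basis vectors at fixed $(l,n)$ with coefficients $s(l+\oh)$ and $s^*(l+\oh)$ respectively, yielding a block-diagonal operator whose $(l,n)$-block has eigenvalues $\tfrac{\alpha}{4}\pm|s|(j+1)$, each of multiplicity $2l+1=2(j+1)$, where $j:=l-\oh$. Up to the scalar shift by $\alpha/4$, this is exactly the spectrum (with multiplicities) of the standard Dirac operator on the round $S^2$ of radius $(2|s|)^{-1}$. The real structure $J$ restricts to an antiunitary involution exchanging the two components of each $(l,n)$-block, i.e.\ to the usual charge conjugation on the spinor bundle, and the generators $A,B,B^*$ of $\CA^{U(1)}\cong \CA(S^2)$ shift $n$ while preserving both $m=\mp\oh$ and the $\pm$ label; their matrix coefficients, read off from $\pi_0$, agree with the classical representation by multiplication by the sphere coordinates.

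The main obstacle is to upgrade the coincidence of spectrum, algebra representation and real structure into a bona fide unitary equivalence of real spectral triples. The cleanest way to organise this is to appeal to the functoriality of the Connes--Landi construction \eqref{splitiso}--\eqref{splitisovec}: since the $U(1)$-fibre lies entirely inside the deformation torus $\Tb^2$, the $U(1)$-invariant part of the $\theta$-deformed data on $S^3_\theta$ coincides canonically with the $U(1)$-invariant part of the undeformed data on $S^3$, reducing the statement to its well-known classical counterpart. Alternatively one can argue directly: after the identifications above, both sides are $SU(2)$-equivariant real spectral triples over $\CA(S^2)$ with the same spinor bundle (realised via Peter--Weyl on $SU(2)/U(1)=S^2$ with $m=\mp\oh$ giving the Wigner $D$-matrix elements) and the same Dirac spectrum with multiplicities, which determines them up to unitary equivalence.
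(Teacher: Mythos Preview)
Your proposal is correct and, in its concrete part, matches the paper's proof: identify $\CH_0=\ker\delta$ as the closed span of $\ket{l,-\oh,n,+}$ and $\ket{l,\oh,n,-}$ for half-integer $l$, and compute the restrictions of $D$, $J$, $\Gamma$ and of the algebra in this basis. The paper is terser than you are: rather than invoking Connes--Landi functoriality or a uniqueness theorem for $SU(2)$-equivariant triples, it simply declares the unitary by identifying these basis vectors with the standard equivariant basis of spinors on $S^2$ and checks that the matrix coefficients agree. Your direct alternative at the end is exactly this. Your observation about the $\alpha/4$ shift is more careful than the paper, which silently drops it when writing the restricted $D$; and your functoriality argument, while heavier, has the virtue of explaining \emph{why} the answer is $\theta$-independent.
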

\begin{proof}
The invariant subspace of $\CH$, $\CH_0 \subset \CH$ is a closure of the linear span
of the following vectors: $\ket{l,-\oh,n,+}$ and $\ket{l,\oh,n,-}$ for 
$l=\oh,\frac{3}{2},\ldots$. Identifying these vectors with the standard equivariant 
basis of the Hilbert space of the spinor bundle over $S^2$ we obtain the desired 
isometry. Again, a simple check verifies that $\Gamma$ and $J$ are indeed the same
as in the case of standard triple over $S^2$. The restriction of $D$ to this space is:
$$ D \ket{l,-\oh,n,+} = s   (l+\oh) \, \ket{l,\oh,n,-}, \;\;\;
D_h \ket{l,\oh, n, -} = s^* (l+\oh) \, \ket{l,-\oh,n,+}, $$
so indeed we recover the standard Dirac operator.
\end{proof}
\begin{lemma}
The first order differential calculus generated by $D$ is compatible with the usual
de Rham differential calculus on $C^\infty(S^1)$.
\end{lemma}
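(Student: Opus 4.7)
The plan is to reduce everything to the splitting \eqref{S3split}, namely $D = D_h + \frac{r}{2}\Gamma\delta + Z$ with $Z = \frac{\alpha}{4}\id$ central. First, for any $q\in\CA(S^3_\theta)$ one has $[Z,q]=0$, and $\delta$ restricts to a derivation on the algebra so that $[\delta,\pi(q)] = \pi(\delta(q))$ at the operator level. Combining these observations yields the key identity
$$
[D,q] \;=\; [D_h,q] + \frac{r}{2}\,\Gamma\,\delta(q).
$$

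Next I would use the $\Z_2$-grading on $\End(\CH)$ induced by conjugation by $\Gamma$. By construction $\pi(\CA(S^3_\theta))$, the generator $\delta$, and $\Gamma$ itself all commute with $\Gamma$ and are therefore even; on the other hand, the explicit formulas displayed just above the lemma show that $D_h$ sends $\ket{l,m,n,\pm}$ to multiples of $\ket{l,m\pm 1,n,\mp}$ and hence satisfies $D_h\Gamma = -\Gamma D_h$, i.e.\ $D_h$ is odd. Consequently, for arbitrary $p_i,q_i\in\CA(S^3_\theta)$, each summand $p_i[D_h,q_i]$ is odd while each $\frac{r}{2}\,p_i\,\Gamma\,\delta(q_i)$ is even under $\Gamma$-conjugation.

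Finally, starting from an assumed relation $\sum_i p_i[D,q_i]=0$ in $\Omega^1_D(\CA(S^3_\theta))$ and separating odd and even parts forces both of them to vanish; the even one gives $\Gamma\sum_i p_i\,\delta(q_i)=0$. Multiplying on the left by $\Gamma$ (using $\Gamma^2=\id$), dividing out the nonzero constant $r/2$, and invoking faithfulness of $\pi$ yields the algebraic identity $\sum_i p_i\,\delta(q_i)=0$, which is precisely \eqref{eq43}. The only step that requires actual verification is the odd parity of $D_h$ under $\Gamma$, but this is immediate from the explicit action of $D_h$ on the basis $\ket{l,m,n,\pm}$; everything else is bookkeeping with the $\Gamma$-grading and the centrality of $Z$.
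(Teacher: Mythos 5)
Your argument is correct and is essentially the same as the paper's: both start from the split $D = D_h + \frac{r}{2}\Gamma\delta + Z$ with $Z$ central, note that $D_h$ anticommutes with $\Gamma$ while $\pi(\CA)$ and $\delta$ commute with $\Gamma$, and then extract $r\sum_i p_i\delta(q_i)$ as the $\Gamma$-even part of $\sum_i p_i[D,q_i]$. The paper packages this by computing the anticommutator of $\sum_i p_i[D,q_i]$ with $\Gamma$, which is equivalent bookkeeping to your even/odd decomposition; also note that $D_h\Gamma=-\Gamma D_h$ follows directly from $D_h=\frac{1}{2}\Gamma[\Gamma,D]$, so appealing to the explicit matrix elements is unnecessary (though harmless).
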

\begin{proof}
Let us observe that using the split of $D$ (\ref{S3split}) we have:
$$ [D, x] = [D_h, x] + \frac{r}{2} \Gamma \delta(x). $$
Since $x$ commutes with $\Gamma$ and $D_h$ anticommutes 
with $\Gamma$, then computing the anticommutator of
$ \sum_i p_i [D, q_i] $, for any $p_i,q_i \in \CA(S^3_\theta)$ we
obtain:
$$ r \sum_i p_i \delta(q_i). $$
Therefore if the first expression vanishes so must the latter and therefore
the calculus generated by $D$ is compatible with the de Rham calculus over
the circle.
\end{proof}

We pass now to the strong connection.
\begin{thm}
The following one-form is a strong connection for the spectral triples 
over the noncommutative $U(1)$-bundle $ \CA(S^2) \hookrightarrow \CA(S^3_\theta) $:
\begin{equation}
\omega = \frac{r}{2} (a^* [D,a] +b [D,b^*]). \label{strongs3}
\end{equation}
and the Dirac operator $D$ (\ref{Diracs3}) is compatible with $\omega$ in the sense
of Definition \ref{defcomp}. 
\end{thm}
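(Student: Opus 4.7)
The plan is to verify in turn the three defining conditions of a strong principal connection (Definition~\ref{defcon}) and then the compatibility condition (Definition~\ref{defcomp}). Throughout, the central tool is the splitting
\[
[D, x] \;=\; [D_h, x] + \tfrac{r}{2}\,\Gamma\,\delta(x), \qquad x \in \CA(S^3_\theta),
\]
which is an immediate consequence of $D = D_h + D_v + Z$ with $Z = \tfrac{\alpha}{4}\id$ being central. For $U(1)$-invariance, using $[\delta, D] = 0$ and the weights $\delta(a)=a$, $\delta(a^*)=-a^*$, $\delta(b)=-b$, $\delta(b^*)=b^*$, each summand $a^*[D,a]$ and $b[D,b^*]$ has total weight zero, so $[\delta,\omega]=0$. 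For the vertical field condition, write $\omega=\sum_i p_i[D,q_i]$ with $p_1=\tfrac{r}{2}a^*,\ p_2=\tfrac{r}{2}b,\ q_1=a,\ q_2=b^*$ and use the normality $a^*a=aa^*$ together with $aa^*+bb^*=1$ to find $\sum_i p_i\,\delta(q_i)=\tfrac{r}{2}$; this matches $1/l$ for the fiber length $l=2/r$ and equals $1$ precisely at the canonical normalization $r=2$.

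For strongness, substituting the splitting of $[D,\cdot]$ into $\omega$ and exploiting $\Gamma\in\pi(\CA(S^3_\theta))'$ together with $aa^*+bb^*=1$ rewrites
\[
\omega \;=\; \tfrac{r}{2}\,\omega_h \,+\, \tfrac{r^2}{4}\,\Gamma,\qquad \omega_h:=a^*[D_h,a]+b[D_h,b^*].
\]
For $x\in\CA(S^3_\theta)^{(k)}$ this gives $\nabla_\omega(x)=[D_h,x]+\tfrac{kr(2-r)}{4}\,\Gamma x-\tfrac{kr}{2}\,x\,\omega_h$; at $r=2$ the $\Gamma$-term vanishes and, using the companion identity $\omega_h=-[D_h,a^*]a-[D_h,b]b^*$ (obtained by applying $[D_h,\cdot]$-Leibniz to $d(aa^*+bb^*)=d(1)=0$), the generator $x=a$ collapses to
\[
\nabla_\omega(a) \;=\; [D_h,aa^*]\,a \,+\, [D_h,ab]\,b^*,
\]
which lies manifestly in $\Omega^1_D(\B)\,\CA(S^3_\theta)$ since $aa^*,ab\in\B$; the generator $b^*$ is handled by a symmetric manipulation, and arbitrary $x\in\CA(S^3_\theta)^{(k)}$ follows from the derivation property of $\nabla_\omega$.

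Compatibility asserts $D_\omega=D_h$ on the dense subset $\bigoplus_k\H_0\otimes_{\B}\CA(S^3_\theta)^{(k)}$. The selfadjointness formula of Section~\ref{tDo} (in its odd-case adaptation) gives $D_\omega=D+j_0\omega^*j_0^{-1}\delta-Z'$, while $D_h=D-\tfrac{r}{2}\,\Gamma\delta-Z$; since $Z=Z'=\tfrac{\alpha}{4}\,\id$ is scalar, the required equality reduces to $(j_0\,\omega\,j_0^{-1}+\tfrac{r}{2}\,\Gamma)(hp)=0$ for $h\in\H_0,\ p\in\CA(S^3_\theta)^{(k)},\ k\neq 0$. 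Since $\CA(S^3_\theta)$ has KR-dimension $3$, one has $j_0=J$; then $JDJ^{-1}=D$ and $J\Gamma J^{-1}=-\Gamma$ give $J\omega J^{-1}=\tfrac{r}{2}\,\omega_h^\circ-\tfrac{r^2}{4}\,\Gamma$ with $\omega_h^\circ:=J\omega_h J^{-1}$, so at $r=2$ the check reduces to the vanishing $\omega_h^\circ(hp)=0$ on the dense subset. A Leibniz manipulation parallel to the one used for strongness, combined with the explicit matrix elements of $\pi(a),\pi(b),D,J$ in the basis $\ket{l,m,n,\pm}$ of Section~7, establishes this vanishing.

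The main obstacle I anticipate is this last step: the operator $\omega_h^\circ$ is not manifestly zero, and its vanishing on vectors $hp$ encodes precisely the fact that $\omega$ extracts the vertical direction of $D$. One may either carry out the explicit $\ket{l,m,n,\pm}$-basis computation or argue abstractly using the first-order condition together with $[D_h,\pi^\circ(aa^*+bb^*)]=0$. The preceding three verifications are each a routine Leibniz manipulation anchored by the sphere relation $aa^*+bb^*=1$ and the normality of the generators.
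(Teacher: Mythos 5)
Your proposal follows the same overall skeleton as the paper (check the three conditions of Definition~\ref{defcon}, then compatibility) but takes a noticeably different route in the two substantive steps, and it leaves a genuine gap at the end. For strongness, you substitute the splitting $[D,x]=[D_h,x]+\tfrac{r}{2}\Gamma\delta(x)$ into $\omega$ and use the companion identity $\omega_h=-[D_h,a^*]a-[D_h,b]b^*$ to rewrite $\nabla_\omega(a)$ as $[D_h,aa^*]a+[D_h,ab]b^*$; this is a valid derivation (your companion identity does follow from $[D_h,a^*a+bb^*]=0$ via Leibniz, despite being stated slightly loosely). The paper instead verifies the concrete identities $da-\delta(a)\omega=a\,dA+b^*\,dB$ and $db-\delta(b)\omega=\lambda a^*\,dB-b\,dA$ directly, and then observes that $x\mapsto dx-\delta(x)\omega$ satisfies Leibniz because $\omega$ is a \emph{central} one-form in $\Omega^1_D(\CA(S^3_\theta))$, so the generator computations propagate to all of $\CA(S^3_\theta)$. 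The two are comparably involved; yours is perhaps cleaner conceptually because it relies on the abstract splitting rather than on the deformed commutation relations $a\,db=\lambda\,db\,a$ etc.

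The real divergence, and the gap, is in the compatibility step. The paper's decisive input is a concrete operator identity: by explicit computation in the $\ket{l,m,n,\pm}$ basis, $\omega$ equals $\tfrac{r}{2}\Gamma$. Once one knows that, compatibility is immediate from the splitting $D=D_h+\tfrac{r}{2}\Gamma\delta+Z$ and the formula $D_\omega=D+j_0\omega^*j_0^{-1}\delta-Z'$: since $j_0=J$ and $J\Gamma J^{-1}=-\Gamma$, one gets $j_0\omega^*j_0^{-1}=-\tfrac{r}{2}\Gamma$, hence $D_\omega=D-\tfrac{r}{2}\Gamma\delta-Z=D_h$. You reach the same reduction but then need the vanishing of $\omega_h^\circ(hp)$ on a dense set, which you explicitly flag as the unresolved obstacle and sketch only two possible strategies for. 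That missing step is precisely what the paper's identity $\omega=\tfrac{r}{2}\Gamma$ delivers. In other words, you have correctly isolated where the nontrivial input is, but you have not supplied it, and without that input the compatibility part of the theorem is not proved. Note also that you restrict to $r=2$ at several points (in the strongness argument and in the final reduction), whereas the paper states the result for general $r$; this is not in itself a fatal objection --- the paper itself bends the normalization of the vertical field condition ``for a suitable length of fibers'' --- but you should either justify why $r=2$ suffices for the general statement after rescaling, or keep the fiber-length $l=2/r$ normalization visible throughout so that the $\Gamma$-contributions cancel for arbitrary $r$ rather than only at $r=2$.
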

\begin{proof}
From construction it is clear that $\omega$ satisfies both $U(1)$ invariance:
$$ [\delta, \omega] = 0, $$
as well as the vertical field condition (modified for a suitable length of fibers):
$$ a^* \delta(a) + b \delta(b^*) = a^* a + b b^* =1,$$
so the only nontrivial condition to verify is strongness. 

To see that it is satisfied first we observe that the bimodule structure of 
$\Omega^1_D(\CA(S^3_\theta)$ resembles the commutation rules of the
algebra, that is $a \, da = da \, a$, $a \, db = \lambda db \, a$ 
etc. As a consequence $da,da^*,db,db^*$ are the generators of 
$\Omega^1_D(\CA(S^3_\theta)$ 
both as a left and as a right module. Moreover, the one forms $dA, dB, dB^*$ 
are central elements of this bimodule.

To demonstrate strongness, we need to show that $[D,x] - \delta(x) \omega 
\in \Omega^1(\CA(S^2)) \CA(S^3_\theta)$. First, let us take $x$ to be the generators
$a$ and $b$. Then, it could be explicitly verified that:

$$
\begin{aligned}
& da - \delta(a) \omega = a \, dA + b^* \, dB, \\
& db - \delta(b) \omega = \lambda a^* \, dB - b\, dA,
\end{aligned}
$$
and by conjugating both  identities we obtain similar ones for $da^*$ and $db^*$. Then, observe that the operator:
$$ x \mapsto dx - \delta(x) \omega, $$
satisfies a Leibniz rule for any $x,y \in \CA(S^3_\theta)$:
$$ d(xy) - \delta(xy) \omega = (dx - \delta(x) \omega) y + x (dy - \delta(y) \omega), $$
because $\omega$ is a central one-form.
 
Putting it all together, for any polynomial in the generators $a,a^*,b,b^*$ we could show
the strongness, As both the commutator with $D$ as well as $\delta$ extends easily to
smooth functions on $S^3_\theta$, so does the property of strongness.

Next,  by straightforward computation we verify that the strong connection $\omega$ 
(\ref{strongs3}) is in fact equal to $ \frac{r}{2} \Gamma$. Then
using the split of the Dirac (\ref{S3split}) and the construction of $D_\omega$ 
(following Proposition 5.2) the last claim of the theorem follows.
\end{proof}



\smallskip

\end{document}